\numberwithin{equation}{section}
\theoremstyle{plain}
\newtheorem{theorem}{Theorem}[section]
\newtheorem{lemma}[theorem]{Lemma}
\newtheorem{corollary}[theorem]{Corollary}
\theoremstyle{definition}
\newtheorem{example}[theorem]{Example}
\newtheorem{remark}[theorem]{Remark}
\newtheorem{definition}[theorem]{Definition}
\DeclareMathOperator{\B}{\beta}
\DeclareMathOperator{\height}{height}
\DeclareMathOperator{\level}{level}
\DeclareMathOperator{\pd}{pd}
\DeclareMathOperator{\reg}{reg}
\begin{document}

\author{Rachelle R. Bouchat}
\address{Department of Mathematics\\
Slippery Rock University\\
1 Morrow Way, Slippery Rock, PA 16057\\
\url{http://academics.sru.edu/math/MATH/RRB/index.html}}
\email{rachelle.bouchat@sru.edu}

\author{Huy T{\`a}i H{\`a}}
\address{Department of Mathematics\\
Tulane University\\
6823 St. Charles Ave., New Orleans, LA 70118\\
\url{www.math.tulane.edu/~tai/}}
\email{tai@math.tulane.edu}

\author{Augustine O'Keefe}
\address{Department of Mathematics\\
Tulane University\\
6823 St. Charles Ave., New Orleans, LA 70118}
\email{aokeefe@tulane.edu}

\title{Path Ideals of Rooted Trees and Their Graded Betti Numbers}
\begin{abstract}
  Let $\Gamma$ be a rooted (and directed) tree, and let $t$ be a positive integer. The path ideal $I_t(\Gamma)$ is generated by monomials that correspond to directed paths of length $(t-1)$ in $\Gamma$. In this paper, we study algebraic properties and invariants of $I_t(\Gamma)$. We give a recursive formula to compute the graded Betti numbers of $I_t(\Gamma)$ in terms of path ideals of subtrees. We also give a general bound for the regularity, explicitly compute the linear strand, and investigate when $I_t(\Gamma)$ has a linear resolution.
\end{abstract}

\maketitle


\section{Introduction}\label{introSection}

The construction of edge ideals associated to (hyper)graphs (cf. \cite{HVT2, villarreal}) provides a viewpoint complementary to the Stanley-Reisner correspondence in the study of monomial ideals. Edge ideals also provide a framework to study (hyper)graph theoretic questions from an algebraic perspective. Let $\Gamma = (V,E)$ be a finite, simple graph over the vertex set $V = \{x_1, \dots, x_n\}$. Let $k$ be any field and identify the vertices in $V$ with the variables in the polynomial ring $S = k[x_1, \dots, x_n]$. The edge ideal of $\Gamma$ is generated by monomials of the form $x_ix_j$, where $e = \{x_i, x_j\}$ is an edge in $\Gamma$. Note that an edge can be viewed as a path of length 1. Thus, for a given positive integer $t$, a more general construction is obtained by considering monomials corresponding to paths of length $(t-1)$ in $\Gamma$. This is the {\it path ideal} construction.

Path ideals were first introduced by Conca and De Negri in \cite{concaDeNegri}, and their algebraic properties have been investigated by various authors in the literature (cf. \cite{brumatti, concaDeNegri, heVanTuyl, restuccia}). In this paper, we shall study path ideals of {\it rooted trees}. Recall that a {\it tree} is a graph in which there exists a unique path between every pair of distinct vertices; a {\it rooted tree} is a tree together with a fixed vertex called the {\it root}. In particular, in a rooted tree there exists a unique path from the root to any given vertex. We can also view a rooted tree as a {\it directed} graph by assigning to each edge the direction that goes ``away'' from the root. {\it Throughout this paper, a rooted tree will always be viewed as a directed, rooted tree in this sense}. If $\{x_i,x_j\}$ is an edge in a rooted tree $\Gamma$, then we write $(x_i,x_j)$ for the ``directed'' edge whose direction is from $x_i$ to $x_j$. The path ideal of a rooted tree is defined in precise form as follows.

\begin{definition} Let $t \ge 1$ be a given integer, and let $\Gamma$ be a rooted tree with vertex set $V=\{x_1,\ldots ,x_n\}$.
\begin{enumerate}
    \item A directed \emph{path} of length $(t-1)$ is a sequence of distinct vertices $x_{i_1}, \dots, x_{i_t}$, in which $(x_{i_j}, x_{i_{j+1}})$ is the directed edge from $x_{i_j}$ to $x_{i_{j+1}}$ for any $j = 1, \dots, t-1$.
    \item The \emph{path ideal} of length $(t-1)$ associated to $\Gamma$ is the monomial ideal
        \[I_t(\Gamma):=(x_{i_1}\cdots x_{i_t}\hspace{2mm}|\hspace{2mm}x_{i_1},\ldots,x_{i_t}\mbox{ is a path in } \Gamma)\subset S=k[x_1,\ldots ,x_n].\]
    In particular, when $t = 1$, $I_1(\Gamma) = (x_1, \dots, x_n)$ is the maximal homogeneous ideal, which is well understood.  Hence, all of our results in this paper will be for path ideals of length at least 1 (i.e., $t \ge 2$).
\end{enumerate}
\end{definition}

Due to the correspondence between paths and monomials we shall often abuse notation and use $x_{i_1} \cdots x_{i_t}$ to denote both the monomial $x_{i_1} \cdots x_{i_t}$ in $k[x_1, \dots, x_n]$ and the path $x_{i_1}, \dots, x_{i_t}$ in $\Gamma$.

A {\it rooted forest} is a disjoint union of rooted trees. For a rooted forest $\Delta$, we define the path ideal $I_t(\Delta)$ to be the sum of the path ideals of the connected components of $\Delta$.

A path ideal $I_t(\Gamma)$ is a squarefree monomial ideal, so it can also be realized as the edge ideal of a hypergraph or the Stanley-Reisner ideal of a simplicial complex. Given a path ideal, the corresponding hypergraph and simplicial complex are in general very complicated. The goal of this paper is then to investigate algebraic properties and invariants of a path ideal $I_t(\Gamma)$ via the combinatorial structures of the rooted tree $\Gamma$. We are interested in invariants associated to the minimal free resolution of $I_t(\Gamma)$, namely, the graded Betti numbers, the Castelnuovo-Mumford regularity, and the projective dimension.

We now provide an overview of the structure of the paper and our results. In Section~\ref{treeSection}, we recall some useful notation and terminology, and prove our first main result; here we give a recursive formula to compute the graded Betti numbers of path ideals (Theorem \ref{nocancellation} and Remark \ref{rmk.recursive}). Section~\ref{sec.regularity} is devoted to studying the Castelnuovo-Mumford regularity of path ideals. The main result of this section, Theorem \ref{thm.reg}, provides a general bound for the regularity of a path ideal in terms of the number of leaves and the number of pairwise disjoint paths of length $t$ in the tree. In Section \ref{sec.linear}, we study the linear strand of $I_t(\Gamma)$ and classify all rooted trees $\Gamma$ for which $I_t(\Gamma)$ has a linear resolution. Our first result of this section, Theorem~\ref{thm.linearstrand}, gives a precise formula for graded Betti numbers $\beta_{i,i+t}(I_t(\Gamma))$ on the linear strand of $I_t(\Gamma)$. Our next result in this section, Theorem \ref{thm.Ntp}, shows that $I_t(\Gamma)$ has a linear resolution if and only if it has linear first syzygies; this is the case if any only if $\Gamma$ belongs to a special class of rooted trees, which we will call broom graphs. In Section \ref{line_graphs}, we restrict our attention to rooted trees occurring as path graphs. For a path graph $\Gamma$, in Theorem~\ref{thm.nonzero}, we characterize which graded Betti numbers of $I_t(\Gamma)$ are nonzero. As a consequence, we compute the regularity of $S/I_t(\Gamma)$ explicitly in Corollary \ref{thm.reglinegraph}. We also recover He and Van Tuyl's formula for the projective dimension of $S/I_t(\Gamma)$ in this case (Corollary~\ref{pdlinegraph}).

\noindent{\bf Acknowledgement.} This project started when the first author visited the other authors at Tulane University. The authors wish to thank Tulane University for its hospitality. The authors would also like to thank Adam Van Tuyl for stimulating discussions and suggestions, and to thank the two anonymous referees for many useful comments making the paper more readable. The second author acknowledges support from the Board of Regents grant LEQSF(2007-10)-RD-A-30.


\section{Path Ideals and Graded Betti Numbers}\label{treeSection}

From this point forward, $\Gamma$ will denote a rooted tree (also viewed as a directed tree) with vertex set $V = \{x_1, \dots, x_n\}$, $k$ will denote a field of arbitrary characteristic, and $t$ will denote a given positive integer. Then $S = k[x_1, \dots, x_n]$ will denote the corresponding polynomial ring.

\vspace{0.20cm}
\noindent{\bf Induced Subgraphs and Examples.} We will now introduce some combinatorial terminology and provide examples of path ideals.

\begin{definition} Let $\Gamma$ be a rooted tree with root $x$. For a given vertex $y$ in $\Gamma$, the {\it level} of $y$, denoted $\level(y)$, is defined to be the length of the unique path from $x$ to $y$. The {\it height} of $\Gamma$, denoted $\height(\Gamma)$, is the maximal level of vertices in $\Gamma$.
\end{definition}

Sometimes we will need to consider rooted forests. The level of a vertex $y$ in a rooted forest $\Delta$ is defined to be the level of $y$ inside the connected component of $\Delta$ containing $y$. The height of a rooted forest $\Delta$ is defined to be the largest height of its connected components.

\begin{example}
Consider the following rooted tree $\Gamma$.
    \begin{center}
    \begin{minipage}[h]{3.5in}
            \includegraphics{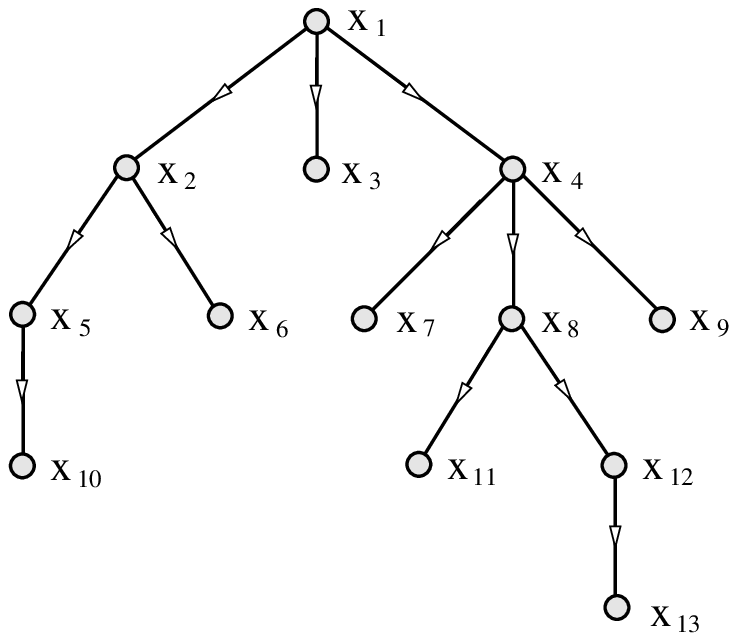}
    \end{minipage}
    \end{center}
The edges in $\Gamma$ are given directions that go ``away'' from the root, making $\Gamma$ a directed tree. For instance, there is a unique path $x_1, x_2$ going from the root $x_1$ to the vertex $x_2$, and a unique path $x_1, x_2, x_6$ going from the root $x_1$ to the vertex $x_6$; and so, the direction of the edge $\{x_2, x_6\}$ is from $x_2$ to $x_6$. It can also be seen that the highest level in $\Gamma$ is 4 ($\level(x_{13}) = 4)$), so $\height(\Gamma) = 4$.

\noindent For $t = 2, 3, 4$, and $5$, we have the following path ideals associated to $\Gamma$:
    \[\begin{array}{rcl}
        I_2(\Gamma) & = & (x_1x_2,x_1x_3,x_1x_4,x_2x_5,x_2x_6,x_4x_7,x_4x_8,x_4x_9,x_5x_{10}, x_8x_{11},x_8x_{12},x_{12}x_{13})\\
        I_3(\Gamma) & = & (x_1x_2x_5,x_1x_2x_6,x_1x_4x_7,x_1x_4x_8,x_1x_4x_9,x_2x_5x_{10}, x_4x_8x_{11},x_4x_8x_{12},x_8x_{12}x_{13})\\
        I_4(\Gamma) & = & (x_1x_2x_5x_{10},x_1x_4x_8x_{11},x_1x_4x_8x_{12},x_4x_8x_{12}x_{13})\\
        I_5(\Gamma) & = & (x_1x_4x_8x_{12}x_{13}).
    \end{array}\]
Notice that the path ideals of $\Gamma$ depend on the choice of the root of $\Gamma$.
\end{example}

\begin{definition} Let $\Gamma$ be a rooted tree, and let $x$ be a vertex in $\Gamma$.
\begin{enumerate}
\item A vertex $z$ in $\Gamma$ is the {\it parent} of $x$ if and only if  $(z,x)$ is a directed edge in $\Gamma$. A vertex $y$ is called a {\it child} of $x$ if $(x,y)$ is a directed edge in $\Gamma$.
\item A vertex $z \not= x$ is an {\it ancestor} of $x$ if there is a path from $z$ to $x$. A vertex $y \not= x$ is a {\it descendant} of $x$ if there is a path from $x$ to $y$.
\item The vertex $x$ is called a {\it leaf} of $\Gamma$ if $x$ has no descendants.
\item The vertex $x$ is called the {\it root} of $\Gamma$ if $x$ has no ancestors.
\item The {\it degree} of a vertex $x$ in $\Gamma$, denoted by $\deg_\Gamma(x)$, is the number of edges in $\Gamma$ incident to $x$.
\end{enumerate}
\end{definition}

\begin{definition} \quad
\begin{enumerate}
\item Let $G$ be a finite simple graph. A subgraph $H$ of $G$ is called an {\it induced subgraph} if for every pair of vertices $x, y$ in $H$ the following condition holds: if $\{x,y\}$ is an edge in $G$, then it is also an edge in $H$.
\item Let $\Gamma$ be a rooted tree. An {\it induced subtree} (or {\it forest}) of $\Gamma$ is a directed subtree (or forest) that is also an induced subgraph of $\Gamma$.
\item Let $\Gamma$ be a rooted tree and let $x$ be a vertex in $\Gamma$. The {\it induced subtree rooted at $x$} of $\Gamma$ is the induced subtree of $\Gamma$ over the vertex set $\{x\} \cup \{y ~|~ y \text{ is a descendant of } x\}$ (with $x$ considered as its root).
\end{enumerate}
\end{definition}

\noindent{\bf Notation.} Let $\Gamma$ be a rooted tree, and let $P$ be a collection of vertices in $\Gamma$. We shall denote by $\Gamma \backslash P$ the induced subforest of $\Gamma$ obtained by removing the vertices in $P$ and the edges incident to these vertices. If $P$ consists of a single element $x$, then we write $\Gamma \backslash x$ for $\Gamma \backslash \{x\}$.

\vspace{0.20cm}
\noindent{\bf Minimal Free Resolutions.}
Let $S = k[x_1, \dots, x_n]$, and let $M$ be a finitely generated graded $S$-module. Associated to $M$ is a \emph{minimal free resolution}, which is a finite complex of the form
    \[0\rightarrow\bigoplus_j S(-j)^{\beta_{p,j}(M)}\stackrel{\delta_p}{\longrightarrow}\bigoplus_j S(-j)^{\beta_{p-1,j}(M)}\stackrel{\delta_{p-1}}{\longrightarrow} \cdots\stackrel{\delta_1}{\longrightarrow}\bigoplus_j S(-j)^{\beta_{0,j}(M)}\rightarrow M\rightarrow 0\]
where the maps $\delta_i$ are exact and where $S(-j)$ denotes the translation of $S$ obtained by shifting the degrees of elements of $S$ by $j$. The numbers $\beta_{i,j}(M)$ are called the \emph{graded Betti numbers} of $M$, and they provide the number of minimal generators of degree $j$ occuring in the $i$th-syzygy module of $M$.


If $M$ is generated in degree $t$, then the {\it linear strand} of $M$ is given by the Betti numbers $\beta_{i,i+t}(M)$, for $i > 0$. In this case, $M$ is said to have linear first syzygies if $\beta_{1,j}(M) = 0$ for all $j \not= 1+t$; and more generally, $M$ is said to have a {\it linear resolution} if $\beta_{i,j}(M) = 0$ for all $i > 0$ and $j \not= i+t$.

We are interested in the following two invariants that measure the ``size'' of the minimal free resolution.

\begin{definition} Let $S$ and $M$ be as above.
\begin{enumerate}
\item The \emph{projective dimension} of $M$, denoted by $\pd(M)$, is the length of the minimal free resolution associated to $M$.
\item The \emph{Castelnuovo-Mumford regularity} (or simply, \emph{regularity}), denoted by $\reg(M)$, is a measure of the width of the minimal free resolution of $M$ and is defined as
    \[\reg(M):=\max\{j-i\hspace{2mm}|\hspace{2mm}\beta_{i,j}(M)\neq 0\}.\]
\end{enumerate}
\end{definition}

\begin{lemma} \label{lem.ringextension}
Let $S = k[x_1, \dots, x_n]$, and let $M$ be a graded $S$-module. Let $y_1, \dots, y_m$ be indeterminates, and denote by $R$ the polynomial ring $k[x_1, \dots, x_n, y_1, \dots, x_m]$. Then
$$\reg(M) = \reg(M \otimes_S R),$$
where the second regularity is computed for the $R$-module $M \otimes_S R$.
\end{lemma}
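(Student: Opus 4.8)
The plan is to compare minimal graded free resolutions over $S$ and over $R$. Since $R = S[y_1,\dots,y_m]$ is a free, hence flat, $S$-algebra, one starts with the minimal graded free resolution $F_\bullet \to M$ of $M$ over $S$ and applies $-\otimes_S R$. Flatness preserves exactness, and $S(-j)\otimes_S R \cong R(-j)$ once we give each $y_i$ degree $1$, so $F_\bullet \otimes_S R$ is a graded free resolution of $M\otimes_S R$ over $R$. The next step is to check that minimality is inherited: the differentials of $F_\bullet$ are given by matrices with entries in the homogeneous maximal ideal $\mathfrak{m}_S = (x_1,\dots,x_n)$, and after tensoring with $R$ these same matrices have entries in $\mathfrak{m}_S R \subseteq \mathfrak{m}_R = (x_1,\dots,x_n,y_1,\dots,y_m)$. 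Hence $F_\bullet \otimes_S R$ is a minimal graded free resolution of $M \otimes_S R$.

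It follows that $\beta^R_{i,j}(M\otimes_S R) = \beta^S_{i,j}(M)$ for all $i$ and $j$ (equivalently, $\operatorname{Tor}^R_i(M\otimes_S R, k)_j \cong \operatorname{Tor}^S_i(M,k)_j$). Since the regularity of a module is defined purely in terms of the set of pairs $(i,j)$ for which the graded Betti number is nonzero, namely $\reg = \max\{j-i \mid \beta_{i,j}\neq 0\}$, the equality of all graded Betti numbers immediately gives $\reg(M) = \reg(M\otimes_S R)$.

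I do not expect a genuine obstacle here; the only point requiring a moment's care is confirming that minimality survives the base change, and this is clear because adjoining new variables only enlarges the homogeneous maximal ideal and so cannot turn any entry of a differential matrix into a unit. (This lemma is standard flat base change; we record it because later arguments repeatedly pass between the ambient polynomial ring of a tree and that of one of its subtrees.)
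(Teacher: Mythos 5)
Your proof is correct and follows the same route as the paper: flat base change along $S \to R$ carries a minimal graded free resolution of $M$ to one of $M\otimes_S R$, so the graded Betti numbers coincide and the regularity equality follows from the definition. You simply make explicit the minimality check (entries staying in the maximal ideal) that the paper leaves as "clear."
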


\begin{proof} It is clear that the ring extension $S \rightarrow R$ is flat. Thus, tensoring with $R$ maps a minimal free resolution to a minimal free resolution. The result now follows from the definition of regularity.
\end{proof}

Lemma \ref{lem.ringextension} allows us to look at extensions of ideals in rings with more variables when discussing regularity. For instance, if $\Gamma$ is a rooted tree corresponding to the polynomial ring $R$ and $\Delta$ is an induced, rooted subtree of $\Gamma$ corresponding to the polynomial ring $S$ (i.e.\ the variables in $R$ correspond to the vertices in $\Gamma$, and the variables in $S$ correspond to the vertices in $\Delta$), then we can abuse notation and write $I_t(\Delta)$ for both the path ideal of $\Delta$ in $S$ and also for its extension in the bigger ring $R$ when discussing regularity.

\vspace{0.20cm}
\noindent{\bf Mapping Cone Decomposition of Path Ideals.}
To study the minimal free resolutions of the quotient rings $S/I_t(\Gamma)$, we provide an inductive construction of the path ideals via the mapping cone construction.  This construction is a generalization of the method provided for edge ideals in \cite{bouchat}. This method will allow the decomposition of a given path ideal into a collection of simpler path ideals corresponding to smaller trees.  

Given a short exact sequence
    \[0\longrightarrow M_1\longrightarrow M_2\longrightarrow M_3\longrightarrow 0\]
where $M_1$, $M_2$, and $M_3$ are graded $S$-modules, the mapping cone is a method to construct a free resolution for $M_3$ knowing free resolutions of $M_1$ and $M_2$ (for more details on the mapping cone construction we refer the reader to \cite{Weibel}).
In general, given {\it minimal} free resolutions for $M_1$ and $M_2$, the mapping cone construction does not necessarily give a {\it minimal} free resolution of $M_3$. However, in the case of path ideals, we shall show that the mapping cone construction does indeed provide a minimal free resolution for a particular short exact sequence.

\begin{theorem}\label{nocancellation}
    Let $\Gamma$ be a rooted tree with vertex set $V=\{x_1,\ldots ,x_n\}$ and height $h \ge t-1$.  Let $x_{i_t}$ denote a leaf of $\Gamma$ of level at least $(t-1)$.  Then by letting $x_{i_1},\ldots ,x_{i_t}$ denote the path terminating at $x_{i_t}$, the mapping cone procedure applied to the sequence
            \[0\rightarrow \big(S\big/I_t(\Gamma \backslash x_{i_{t}}): (x_{i_1}\cdots x_{i_t})\big)(-t)\stackrel{x_{i_1}\cdots x_{i_t}}{\longrightarrow} S/I_t(\Gamma \backslash x_{i_{t}}) \longrightarrow S/I_t(\Gamma) \rightarrow 0\]
    provides a minimal free resolution of $S/I_t(\Gamma)$. In particular, for any $i$ and $j$, we have
        \[\B_{i,j}(S/I_t(\Gamma))=\B_{i,j}(S/I_t(\Gamma \backslash x_{i_{t}})) + \B_{i-1,j-t}\big(S\big/I_t(\Gamma \backslash x_{i_{t}}):(x_{i_1}\cdots x_{i_t})\big).\]
\end{theorem}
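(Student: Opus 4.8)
The plan is to verify that the mapping cone construction applied to the given short exact sequence yields a *minimal* free resolution, which then forces the stated Betti number identity. The short exact sequence itself is standard: for any monomial $m = x_{i_1}\cdots x_{i_t}$ and any monomial ideal $J$, we have the exact sequence $0 \to (S/(J:m))(-\deg m) \xrightarrow{\ m\ } S/J \to S/(J+(m)) \to 0$. Here I would take $J = I_t(\Gamma \backslash x_{i_t})$ and $m = x_{i_1}\cdots x_{i_t}$, and observe that $I_t(\Gamma) = I_t(\Gamma \backslash x_{i_t}) + (x_{i_1}\cdots x_{i_t})$: indeed, since $x_{i_t}$ is a leaf of level $\ge t-1$, the only directed path of length $t-1$ in $\Gamma$ that uses $x_{i_t}$ is the path $x_{i_1}, \dots, x_{i_t}$ terminating at $x_{i_t}$ (a leaf can only be the last vertex of a directed path, and the ancestors are uniquely determined in a tree), so deleting $x_{i_t}$ removes exactly this one generator. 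This gives the exact sequence in the statement.

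Next, given minimal free resolutions $F_\bullet \to S/I_t(\Gamma\backslash x_{i_t})$ and $G_\bullet \to (S/(I_t(\Gamma\backslash x_{i_t}):m))(-t)$, the mapping cone on the connecting map $G_\bullet \to F_\bullet$ produces a free resolution of $S/I_t(\Gamma)$ whose $i$-th term is $F_i \oplus G_{i-1}$. This resolution is minimal precisely when the comparison maps $G_{i-1} \to F_i$ have entries in the maximal ideal $\mathfrak m = (x_1, \dots, x_n)$, i.e., when no generator of $G_{i-1}$ maps isomorphically onto a generator of $F_i$ (no unit entries, "no cancellation"). So the crux is to prove this non-cancellation statement. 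The standard approach, following the edge-ideal argument in \cite{bouchat}, is a degree/multigrade bookkeeping argument: one identifies the multidegrees of the generators of $F_i$ and of $G_{i-1}$ and shows the multidegree sets are disjoint in the relevant homological degrees. Concretely, every minimal generator of $S/(I_t(\Gamma\backslash x_{i_t}):m)$ in $G_\bullet$ contributes, after the shift by $(-t)$ and the multiplication by $m$, a multidegree that is divisible by $x_{i_t}$ (because the shift records multiplication by $m = x_{i_1}\cdots x_{i_t}$ and $x_{i_t}$ does not appear in any generator of the colon ideal, since $x_{i_t} \notin S' := k[\text{vertices of } \Gamma\backslash x_{i_t}]$), whereas no generator appearing in $F_\bullet$, the resolution of $S/I_t(\Gamma\backslash x_{i_t})$, involves the variable $x_{i_t}$ at all. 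Hence the two families of multigraded generators live in disjoint sets of multidegrees, so the comparison maps cannot have unit entries, and the mapping cone is minimal.

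I expect the main obstacle to be the last paragraph's multigraded non-cancellation argument, and specifically justifying that every minimal generator of the $S$-module $S/(I_t(\Gamma\backslash x_{i_t}):m)$, when viewed with its intrinsic multidegree inside $S' = k[x_j : x_j \neq x_{i_t}]$, produces after tensoring with $S$ and shifting by $m$ a generator divisible by $x_{i_t}$ — this requires knowing that the colon ideal $I_t(\Gamma\backslash x_{i_t}) : m$ is extended from $S'$ (equivalently, is generated by monomials not involving $x_{i_t}$), which holds because $I_t(\Gamma\backslash x_{i_t})$ is an ideal of $S'$ and $m$'s only variable outside $S'$ is $x_{i_t}$, so the colon picks up no dependence on $x_{i_t}$. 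Once minimality of the mapping cone is established, the Betti number formula $\beta_{i,j}(S/I_t(\Gamma)) = \beta_{i,j}(S/I_t(\Gamma\backslash x_{i_t})) + \beta_{i-1,j-t}(S/(I_t(\Gamma\backslash x_{i_t}):m))$ is immediate by reading off ranks in homological degree $i$ and internal degree $j$ from the cone $F_i \oplus G_{i-1}$, accounting for the degree shift by $t$ in the $G$-part.
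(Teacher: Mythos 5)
Your proposal is correct and follows essentially the same route as the paper: the same short exact sequence, the same mapping cone, and the same key observation that $x_{i_t}$ divides no minimal generator of $I_t(\Gamma \backslash x_{i_t})$, so the colon ideal involves no $x_{i_t}$. The only (cosmetic) difference is in how the absence of unit entries in the comparison maps is certified: you use multidegree bookkeeping in the $x_{i_t}$-exponent, while the paper factors the multiplication map through $x_{i_t}$ so that the lifted maps have all entries divisible by $x_{i_t}$; both arguments are valid.
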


\begin{proof}
    Since $x_{i_t}$ does not divide a minimal generator of $I_t(\Gamma \backslash x_{i_{t}})$,
        \[I_t(\Gamma \backslash x_{i_{t}}): (x_{i_1}\cdots x_{i_t}) = I_t(\Gamma \backslash x_{i_{t}}) : (x_{i_1}\cdots x_{i_{t-1}}).\]
    However, this implies that the exact sequence
    \begin{align}
        0\longrightarrow \big(S\big/I_t(\Gamma \backslash x_{i_{t}}): (x_{i_1}\cdots x_{i_t})\big)(-t) \stackrel{x_{i_1}\cdots x_{i_t}}{\longrightarrow} S/I_t(\Gamma \backslash x_{i_{t}}) \longrightarrow S/I_t(\Gamma) \rightarrow 0 \label{eq:sequence}
    \end{align}
    factors as
       \begin{small}\begin{align}\xymatrix@R=20pt{
            0\ar[r] & \big(S\big/I_t(\Gamma \backslash x_{i_{t}}): (x_{i_1}\cdots x_{i_t})\big)(-t) \ar[r]^{\mbox{\hspace{.6in}}x_{i_1}\cdots x_{i_t}}\ar[dd]^{x_{i_t}} & S/I_t(\Gamma \backslash x_{i_{t}}) \ar[r] & S/I_t(\Gamma) \ar[r] & 0.\\
            & & & &\\
            & \big(S\big/I_t(\Gamma \backslash x_{i_{t}}) :((x_{i_1}\cdots x_{i_{t-1}}))\big)(-t+1) \ar[uur]^{x_{i_1}\cdots x_{i_{t-1}}} & & &}\label{eq:factor}
        \end{align}\end{small}
            
Let
     \begin{align}
     & 0 \longrightarrow \dots \stackrel{\phi_2}{\longrightarrow} F_1 \stackrel{\phi_1}{\longrightarrow} F_0=S \stackrel{\phi_0}{\longrightarrow} S\big/I_t(\Gamma \backslash x_{i_{t}}): (x_{i_1}\cdots x_{i_t}) \longrightarrow 0, \mbox{ and } \label{eq:mfr1} \\
     & 0 \longrightarrow \dots \stackrel{\psi_2}{\longrightarrow} G_1 \stackrel{\psi_1}{\longrightarrow} G_0=S \stackrel{\psi_0}{\longrightarrow} S/I_t(\Gamma \backslash x_{i_{t}}) \longrightarrow 0 \label{eq:mfr2}
     \end{align}
 \noindent be minimal free resolutions of $\big(S\big/I_t(\Gamma \backslash x_{i_{t}}): (x_{i_1}\cdots x_{i_t})\big)$ and $S/I_t(\Gamma \backslash x_{i_{t}})$ respectively.  Then the mapping cone construction applied to the short exact sequence \eqref{eq:sequence} provides a free resolution of $S/I_t(\Gamma)$ given by
   $$0\longrightarrow \dots \stackrel{\sigma_3}{\longrightarrow} G_2\bigoplus F_1(-t) \stackrel{\sigma_2}{\longrightarrow} G_1\bigoplus S(-t) \stackrel{\sigma_1}{\longrightarrow} S \stackrel{\sigma_0}{\longrightarrow} S/I_t(\Gamma) \longrightarrow 0,$$     
  \noindent where the maps $\sigma_i$ are defined by $\sigma_1=[\psi_1-\delta_0]$ and 
    \begin{align}\sigma_i =\left[ \begin{array}{cc} \psi_i & (-1)^i\delta_{i-1}\\ 0 & \phi_{i-1}\end{array}\right]\mbox{\hspace{.25in}for }i>1, \label{eq:sigma} \end{align}
($\delta_i:F_i(-t) \rightarrow G_i$ are resulted from the homomorphism $\big(S\big/I_t(\Gamma \backslash x_{i_{t}}): (x_{i_1}\cdots x_{i_t})\big)(-t) \stackrel{x_{i_1}\cdots x_{i_t}}{\longrightarrow} S/I_t(\Gamma \backslash x_{i_{t}})$).  

From the factorization given in \eqref{eq:factor}, the entries of the matrix of $\delta_i$ are not units. Furthermore, since \eqref{eq:mfr1} and \eqref{eq:mfr2} are minimal free resolutions, the matrix representation of $\sigma_i$ in \eqref{eq:sigma} cannot contain units.  Therefore, the mapping cone construction applied to \eqref{eq:sequence} provides a minimal free resolution of $S/I_t(\Gamma)$.  In particular, this implies that
        \[\B_{i,j}(S/I_t(\Gamma))=\B_{i,j}(S/I_t(\Gamma \backslash x_{i_{t}}))+ \B_{i-1,j-t}\big(S\big/I_t(\Gamma \backslash x_{i_{t}}) :(x_{i_1}\cdots x_{i_t})\big)\]
    for all $i$ and $j$.
\end{proof}

Theorem \ref{nocancellation} provides an inductive method to study algebraic properties of $I_t(\Gamma)$ as the colon ideal $I_t(\Gamma \backslash x_{i_{t}}):(x_{i_1}\cdots x_{i_t})$ can be realized as a disjoint union of path ideals of varying lengths.

\begin{lemma} \label{lem.subtree}
Let $\Gamma$ be a rooted tree of height $h \ge t-1$, let $x_{i_t}$ be a leaf at the highest level in $\Gamma$, and let $x_{i_1}, \dots, x_{i_t}$ be the unique path of length $(t-1)$ terminating at $x_{i_t}$. Let $x_{i_0}$ be the only parent of $x_{i_1}$, if it exists. For $j = 0, \dots, t$, let $\Gamma_j$ be the induced subtree of $\Gamma$ rooted at $x_{i_j}$, and let $\Delta_j = \Gamma_j \backslash \Gamma_{j+1}$ for $j = 0, \dots, t-1$. Then
\begin{align*}
I_t(\Gamma \backslash x_{i_t}) : (x_{i_1} \cdots x_{i_t}) & = I_t(\Gamma \backslash \{x_{i_0}, \dots, x_{i_t}\}) + (x_{i_0}) + \sum_{j=0}^{t-1} I_{t-j}(\Delta_j \backslash \{x_{i_0}, \dots, x_{i_t}\}). \\
\end{align*}
\end{lemma}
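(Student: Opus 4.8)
The plan is to compute the colon ideal at the level of monomial generators and then match them against the generators of the claimed right-hand side. As in the proof of Theorem~\ref{nocancellation}, since $x_{i_t}$ divides no minimal generator of $I_t(\Gamma\backslash x_{i_t})$ we may replace the colon by $I_t(\Gamma\backslash x_{i_t}):(x_{i_1}\cdots x_{i_{t-1}})=:J$. The minimal generators of $I_t(\Gamma\backslash x_{i_t})$ are the monomials $x_p=\prod_{v\in V(p)}v$ as $p$ ranges over the directed paths of length $t-1$ in $\Gamma$ that avoid $x_{i_t}$, so by the usual formula for colons of monomial ideals $J$ is generated by the monomials $g_p=x_p/\gcd(x_p,x_{i_1}\cdots x_{i_{t-1}})=\prod_{v\in V(p)\setminus\{x_{i_1},\dots,x_{i_{t-1}}\}}v$. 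I would then classify each path $p$ by how it meets the ``spine'' $x_{i_0},x_{i_1},\dots,x_{i_t}$.

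For the inclusion $J\subseteq(\text{right-hand side})$, there are three cases. If $x_{i_0}\in V(p)$, then $x_{i_0}\mid g_p$, so $g_p\in(x_{i_0})$. If $x_{i_0}\notin V(p)$ and $V(p)\cap\{x_{i_1},\dots,x_{i_{t-1}}\}=\emptyset$, then $p$ is a path in $\Gamma\backslash\{x_{i_0},\dots,x_{i_t}\}$ and $g_p=x_p\in I_t(\Gamma\backslash\{x_{i_0},\dots,x_{i_t}\})$. The remaining case, $x_{i_0}\notin V(p)$ with $V(p)$ meeting $\{x_{i_1},\dots,x_{i_{t-1}}\}$, is the heart of the argument: letting $x_{i_\ell}$ be the spine vertex of least index in $V(p)$, the tree structure forces $x_{i_\ell}$ to be the source of $p$ (otherwise the parent of $x_{i_\ell}$ in $\Gamma$, namely $x_{i_{\ell-1}}$ or $x_{i_0}$, would lie in $V(p)$, contradicting minimality of $\ell$ or $x_{i_0}\notin V(p)$); then $p$ runs along the spine to some $x_{i_r}$ with $\ell\le r\le t-1$ and branches off $x_{i_r}$ through a child $z_1\ne x_{i_{r+1}}$, so its remaining vertices $z_1,\dots,z_s$ form a directed path of length $s-1$ in $\Delta_r\backslash\{x_{i_0},\dots,x_{i_t}\}$ with $s=t-1-r+\ell\ge t-r$; hence $g_p=z_1\cdots z_s$ is a multiple of the generator $z_1\cdots z_{t-r}$ of $I_{t-r}(\Delta_r\backslash\{x_{i_0},\dots,x_{i_t}\})$.

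For the reverse inclusion, $(x_{i_0})\subseteq J$ because $x_{i_0}x_{i_1}\cdots x_{i_{t-1}}$ is the monomial of the spine path, which lies in $I_t(\Gamma\backslash x_{i_t})$, and $I_t(\Gamma\backslash\{x_{i_0},\dots,x_{i_t}\})\subseteq I_t(\Gamma\backslash x_{i_t})\subseteq J$ trivially. The hypothesis that $x_{i_t}$ sits at the highest level of $\Gamma$ enters in showing $I_{t-j}(\Delta_j\backslash\{x_{i_0},\dots,x_{i_t}\})\subseteq J$ for each $j$: for a directed path $w_1,\dots,w_{t-j}$ of length $t-j-1$ in $\Delta_j\backslash x_{i_j}$, a level count based on $\level(x_{i_j})=\height(\Gamma)-(t-j)$ forces $w_1$ to be a child of $x_{i_j}$ (otherwise $\level(w_{t-j})\ge\height(\Gamma)+1$), so that $x_{i_1},\dots,x_{i_j},w_1,\dots,w_{t-j}$ is a directed path of length $t-1$ in $\Gamma\backslash x_{i_t}$ whose colon quotient by $x_{i_1}\cdots x_{i_{t-1}}$ is exactly $w_1\cdots w_{t-j}$. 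Combining the two inclusions yields the equality; the degenerate case in which $x_{i_1}$ is the root is handled by simply omitting the then-nonexistent objects $x_{i_0}$, $\Gamma_0$, $\Delta_0$ and the summand $(x_{i_0})$.

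I expect the main obstacle to be the combinatorial bookkeeping in the third case of the forward inclusion — showing that such a path travels along a segment of the spine and then exits into exactly one $\Delta_r$, and that the exiting tail together with its ancestors stays inside $\Delta_r\backslash\{x_{i_0},\dots,x_{i_t}\}$ — together with the level-counting argument behind the reverse inclusion, which is the precise point where the ``highest level'' hypothesis is needed.
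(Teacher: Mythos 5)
Your proof is correct and takes essentially the same route as the paper's: expand the colon ideal over the path generators of $I_t(\Gamma\backslash x_{i_t})$ and classify each such path by how it meets the spine $x_{i_0},\dots,x_{i_t}$, with the ``highest level'' hypothesis entering through a level count. Your write-up is merely more explicit than the paper's (separating the two inclusions and isolating exactly where the leaf-level hypothesis is needed, namely for the reverse containment), but the underlying argument is the same.
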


\begin{proof} Let $G$ be the set of minimal generators of $I_t(\Gamma \backslash x_{i_t})$, i.e.\ elements in $G$ corresponding to paths of length $(t-1)$ in $\Gamma \backslash x_{i_t}$. Clearly,
$$I_t(\Gamma \backslash x_{i_t}) : (x_{i_1} \cdots x_{i_t}) = \sum_{Q \in G} (Q) : (x_{i_1} \cdots x_{i_t}).$$

Observe first that $Q_1 = x_{i_0} x_{i_1} \cdots x_{i_{t-1}}$ is a path of length $(t-1)$ in $\Gamma \backslash x_{i_t}$, and $(Q_1) : (x_{i_1} \cdots x_{i_{t-1}}) = (x_{i_0})$. Consider a path $Q$ of length $(t-1)$ in $\Gamma \backslash x_{i_t}$ that does not contain $x_{i_0}$. There are three possibilities for $Q$.

\noindent{\bf Case 1:} $Q$ contains none of the vertices in $\{x_{i_0}, \dots, x_{i_t}\}$, and $Q$ is not a path in the induced subtree rooted at $x_{i_0}$. This is the case if and only if $(Q) : (x_{i_1} \cdots x_{i_t}) = (Q) \subseteq I_t(\Gamma \backslash \{x_{i_0}, \dots, x_{i_t}\})$.

\noindent{\bf Case 2:} $Q$ contains none of the vertices in $\{x_{i_0}, \dots, x_{i_t}\}$, and $Q$ is a path in the induced subtree rooted at $x_{i_0}$. This is the case if and only if $Q$ is a path of length $(t-1)$ in the rooted forest $\Gamma_0 \backslash \{x_{i_0}, \dots, x_{i_t}\}$.

\noindent{\bf Case 3:} $Q$ contains some but not all of the vertices $\{x_{i_1}, \dots, x_{i_{t-1}}\}$, and $Q$ does not contain $x_{i_0}$. Let $s$ be the largest index such that $x_{i_s}$ is in $Q$. Since $x_{i_t}$ is a leaf of highest level in $\Gamma$, $Q$ can contain at most $t-s$ descendants of $x_{i_s}$. This implies that $Q$ must contain all the vertices $x_{i_1}, \dots, x_{i_s}$. This is the case only if $Q \backslash \{x_{i_1}, \dots, x_{i_s}\}$ is a path of length $t-s-1$ in $\Gamma_s \backslash \{x_{i_0}, \dots, x_{i_t}\}$. Furthermore, because $x_{i_t}$ is of highest level in $\Gamma$, any path of length $(t-s-1)$ in $\Gamma_s$ must be from a child of $x_{i_s}$ (other than $x_{i_{s+1}}$) to a leaf in $\Gamma_s$. Thus, Case 3 appears if and only if $(Q) : (x_{i_1} \cdots x_{i_t}) \subseteq I_{t-s}(\Delta_s \backslash \{x_{i_0}, \dots, x_{i_t}\}).$
\end{proof}

\begin{remark} \label{rmk.recursive}
Note that in Lemma \ref{lem.subtree}, $I_t(\Gamma \backslash \{x_{i_0}, \dots, x_{i_t}\}) = I_t(\Gamma \backslash \Gamma_0)$ (or $I_t(\Gamma \backslash \Gamma_1)$ if $x_{i_0}$ does not exist) since $x_{i_t}$ is a leaf at the highest level in $\Gamma$.
Observe further that the minimal generators of the ideals $I_t(\Gamma \backslash \{x_{i_0}, \dots, x_{i_t}\}), (x_{i_0})$, and $I_{t-j}(\Delta_j \backslash \{x_{i_0}, \dots, x_{i_t}\})$ involve pairwise disjoint sets of vertices. Thus, the minimal free resolution of $S/[I_t(\Gamma \backslash \{x_{i_0}, \dots, x_{i_t}\}) + (x_{i_0}) + \sum_{j=0}^{t-1} I_{t-j}(\Delta_j \backslash \{x_{i_0}, \dots, x_{i_t}\})]$ is obtained by taking the tensor product of the minimal free resolutions of $S/I_t(\Gamma \backslash \{x_{i_0}, \dots, x_{i_t}\})$, $S/(x_{i_0})$, and $S/I_{t-j}(\Delta_j \backslash \{x_{i_0}, \dots, x_{i_t}\})$ for $j=0,\ldots,t-1$. Together with Theorem \ref{nocancellation}, this gives a recursive formula to compute the graded Betti numbers of $I_t(\Gamma)$. In particular, the graded Betti numbers of $I_t(\Gamma)$ do not depend on the characteristic of the ground field $k$. This fact was proved in \cite[Theorem 3.1]{heVanTuyl}. It is also a corollary of a more general recursive formula for the graded Betti numbers of simplicial forests given in \cite[Theorem 5.8]{HVT1}.
\end{remark}


\section{Regularity of Path Ideals} \label{sec.regularity}

In this section, we give a bound for the regularity of $I_t(\Gamma)$. From the Alexander duality (cf. \cite[Theorem 5.59]{MillerSturmfels2004}), one obtains the following trivial bound $\reg(S/I_t(\Gamma)) = \reg(I_t(\Gamma)) - 1 = \pd(S/I_t(\Gamma)^\vee) - 1 \le n-1$, where $I_t(\Gamma)^\vee$ is the Alexander dual of $I_t(\Gamma)$. We are seeking a bound for $\reg(S/I_t(\Gamma))$ that is, in general, better than $n-1$. Our bound will be based on the number of leaves and the number of pairwise disjoint paths of length $t$ in $\Gamma$.

\begin{definition}
Let $\Gamma$ be a rooted tree. We define $l_t(\Gamma)$ to be the number of leaves in $\Gamma$ whose level is at least $t-1$ and $p_t(\Gamma)$ to be the maximal number of pairwise disjoint paths of length $t$ in $\Gamma$ (i.e., $p_t(\Gamma) = \max \{|D| ~\big|~ D \text{ is a set of disjoint paths of length $t$ in $\Gamma$}\}$). Note that, in general, $tl_t(\Gamma) \ll n$ and $tp_t(\Gamma) \ll n$.
\end{definition}

In the next few corollaries, $\Gamma$ will denote a rooted tree of height $h \ge t-1$, and $x_{i_t}$ will denote a leaf of highest level in $\Gamma$. Let $x_{i_1}, \dots, x_{i_t}$ be the unique path of length $(t-1)$ terminating at $x_{i_t}$, and let $x_{i_0}$ be the parent of $x_{i_1}$ (if it exists). Set $P = \{x_{i_0}, \dots, x_{i_t}\}$ (or $\{x_{i_1}, \dots, x_{i_t}\}$ if $x_{i_0}$ does not exist).  Furthermore for $j=0,\ldots,t-1$, let $\Gamma_j$ be the induced subtree of $\Gamma$ rooted at $x_{i_j}$, and let $\Delta_j = \Gamma_j \backslash \Gamma_{j+1}$.

\begin{corollary} \label{cor.reg}
We have
$$\reg\big(S\big/\big(I_t(\Gamma \backslash x_{i_t}): (x_{i_1} \cdots x_{i_t})\big)\big) = \reg(S/I_t(\Gamma \backslash P)) + \sum_{j=0}^{t-1} \reg\big(S\big/I_{t-j}(\Delta_j \backslash P)\big).$$
\end{corollary}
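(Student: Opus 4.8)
The plan is to combine Lemma~\ref{lem.subtree} with the standard fact that the regularity of a tensor product of resolutions adds; that is, if two ideals are supported on disjoint sets of variables, then $\reg(S/(J_1 + J_2)) = \reg(S/J_1) + \reg(S/J_2)$. Concretely, Lemma~\ref{lem.subtree} expresses
\[
I_t(\Gamma \backslash x_{i_t}) : (x_{i_1} \cdots x_{i_t}) = I_t(\Gamma \backslash P) + (x_{i_0}) + \sum_{j=0}^{t-1} I_{t-j}(\Delta_j \backslash P),
\]
and, as observed in Remark~\ref{rmk.recursive}, the minimal generators of the summands $I_t(\Gamma \backslash P)$, $(x_{i_0})$, and the $I_{t-j}(\Delta_j \backslash P)$ involve pairwise disjoint sets of vertices. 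Hence the minimal free resolution of the quotient by the sum is the tensor product of the minimal free resolutions of the individual quotients.

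First I would record the ring-extension subtlety: by Lemma~\ref{lem.ringextension}, each of these path ideals may be regarded as living either in the small polynomial ring generated by the relevant vertices or in the full ring $S$, with the same regularity. So it suffices to work with the tensor-product resolution over $S$. Second, I would invoke (or quickly prove) the additivity of regularity under tensor product over a field: if $F_\bullet$ and $G_\bullet$ are the minimal free resolutions of $S/J_1$ and $S/J_2$ with $J_1, J_2$ in disjoint variables, then $F_\bullet \otimes_k G_\bullet$ is the minimal free resolution of $S/(J_1+J_2)$, and from the bigraded shifts one reads off $\reg(S/(J_1+J_2)) = \reg(S/J_1) + \reg(S/J_2)$. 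Third, I would note that $\reg(S/(x_{i_0})) = 0$, so the $(x_{i_0})$ term contributes nothing to the sum. Putting these together yields
\[
\reg\big(S\big/\big(I_t(\Gamma \backslash x_{i_t}) : (x_{i_1} \cdots x_{i_t})\big)\big) = \reg(S/I_t(\Gamma \backslash P)) + \sum_{j=0}^{t-1} \reg\big(S\big/I_{t-j}(\Delta_j \backslash P)\big),
\]
which is exactly the claimed identity.

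There is essentially only one technical point that deserves care: the case where $x_{i_0}$ does not exist (i.e.\ $x_{i_1}$ is the root, so $\level(x_{i_t}) = t-1$). In that situation $P = \{x_{i_1}, \dots, x_{i_t}\}$, the term $(x_{i_0})$ is simply absent, and $\Gamma \backslash P = \Gamma \backslash \Gamma_1$; the same disjointness and additivity argument goes through verbatim, so this is a minor bookkeeping remark rather than a real obstacle. The main thing to be careful about is making sure the vertex sets really are pairwise disjoint — $\Delta_j \backslash P$ consists of the vertices in the induced subtree rooted at $x_{i_j}$ that are neither in $\Gamma_{j+1}$ nor among the $x_{i_\ell}$, so these sets are disjoint for distinct $j$, disjoint from $\{x_{i_0}\}$, and disjoint from $\Gamma \backslash \Gamma_0$ — but this is precisely what Remark~\ref{rmk.recursive} already asserts, so I would simply cite it. Thus the corollary follows formally from Lemma~\ref{lem.subtree}, Remark~\ref{rmk.recursive}, and the additivity of regularity for tensor products over $k$.
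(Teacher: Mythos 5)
Your proposal is correct and follows essentially the same route as the paper: decompose the colon ideal via Lemma~\ref{lem.subtree}, observe the summands involve pairwise disjoint variable sets so the minimal free resolution is a tensor product, add regularities, and drop the $\reg(S/(x_{i_0}))=0$ term. Your extra remarks on the ring-extension point and the case where $x_{i_0}$ does not exist are harmless bookkeeping already built into the paper's conventions.
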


\begin{proof} It is easy to see that for $j=0,\ldots,t-1$ the minimal generators of the ideals $I_t(\Gamma \backslash P)$, $(x_{i_0})$, and $I_{t-j}(\Delta_j \backslash P)$ involve pairwise disjoint sets of vertices. Thus, by Lemma \ref{lem.subtree}, the minimal free resolution of $S\big/\big(I_t(\Gamma \backslash x_{i_t}): (x_{i_1} \cdots x_{i_t})\big)$ is given by the tensor product of the minimal free resolution of $S/I_t(\Gamma \backslash P)$, $S/(x_{i_0})$, and $S/I_{t-j}(\Delta_j \backslash P)$ for $j=0,\ldots,t-1$. This implies that
\begin{align*}
\reg\big(S\big/\big(I_t(\Gamma \backslash x_{i_t})  : (x_{i_1} \cdots x_{i_t})\big)\big) = &
\reg(S/I_t(\Gamma \backslash P)) + \reg(S/(x_{i_0})) + \sum_{j=0}^{t-1} \reg\big(S\big/I_{t-j}(\Delta_j \backslash P)\big).
\end{align*}
The conclusion now follows from the fact that $\reg(S/(x_{i_0})) = 0.$
\end{proof}

\begin{corollary} \label{cor.smalltree}
We have
$$\reg(S/I_t(\Gamma)) = \max \{\reg(S/I_t(\Gamma \backslash x_{i_t})), \reg(S/I_t(\Gamma \backslash P)) + \sum_{j=0}^{t-1} \reg(S/I_{t-j}(\Delta_j \backslash P)) + (t-1)\}.$$
In particular, by considering $\Gamma_0$ in place of $\Gamma$ we have
$$\reg(S/I_t(\Gamma_0)) = \max \{\reg(S/I_t(\Gamma_0 \backslash x_{i_t})), \sum_{j=0}^{t-1} \reg\big(S\big/I_{t-j}(\Delta_j \backslash P)\big) + (t-1) \}.$$
If $x_{i_0}$ does not exist, then
$$\reg(S/I_t(\Gamma_1)) = \max \{\reg(S/I_t(\Gamma_1 \backslash x_{i_t})), \sum_{j=1}^{t-1} \reg\big(S\big/I_{t-j}(\Delta_j \backslash P)\big) + (t-1) \}.$$
\end{corollary}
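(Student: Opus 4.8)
The plan is to read off all three formulas from the mapping-cone decomposition in Theorem \ref{nocancellation}, together with the regularity of the colon ideal computed in Corollary \ref{cor.reg}. For the first and only substantive formula, I would begin with Theorem \ref{nocancellation}: the mapping cone applied to \eqref{eq:sequence} is a minimal free resolution of $S/I_t(\Gamma)$, so
\[\B_{i,j}(S/I_t(\Gamma)) = \B_{i,j}(S/I_t(\Gamma\backslash x_{i_t})) + \B_{i-1,\,j-t}\big(S\big/(I_t(\Gamma\backslash x_{i_t}):(x_{i_1}\cdots x_{i_t}))\big).\]
Since both summands are nonnegative, $\B_{i,j}(S/I_t(\Gamma))\ne 0$ exactly when one of them is; taking the maximum of $j-i$ over each family of nonvanishing positions separately, the first summand contributes $\reg(S/I_t(\Gamma\backslash x_{i_t}))$, and, after the substitution $(i',j')=(i-1,j-t)$ (so that $j-i=(j'-i')+(t-1)$), the second contributes $\reg\big(S/(I_t(\Gamma\backslash x_{i_t}):(x_{i_1}\cdots x_{i_t}))\big)+(t-1)$. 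Hence
\[\reg(S/I_t(\Gamma)) = \max\Big\{\reg(S/I_t(\Gamma\backslash x_{i_t})),\ \reg\big(S/(I_t(\Gamma\backslash x_{i_t}):(x_{i_1}\cdots x_{i_t}))\big)+(t-1)\Big\},\]
and plugging in the value of the colon regularity from Corollary \ref{cor.reg} produces the first displayed identity.

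For the second formula I would apply the first one with $\Gamma_0$ in place of $\Gamma$, after checking that the hypotheses persist: in $\Gamma_0$ the vertex $x_{i_t}$ sits at level $t$, and because $x_{i_t}$ is a leaf of highest level in $\Gamma$ no vertex of $\Gamma_0$ can have level greater than $t$, so $x_{i_t}$ is still a leaf of highest level in $\Gamma_0$ and $\height(\Gamma_0)=t\ge t-1$; the spine $x_{i_1},\dots,x_{i_t}$, its parent $x_{i_0}$, and the subtrees $\Gamma_j$ and forests $\Delta_j$ are unchanged. The only new point is that the ambient term disappears: by Remark \ref{rmk.recursive} applied to $\Gamma_0$, $I_t(\Gamma_0\backslash P)=I_t(\Gamma_0\backslash\Gamma_0')$, where $\Gamma_0'$ is the induced subtree of $\Gamma_0$ rooted at $x_{i_0}$; but $x_{i_0}$ is now the root of $\Gamma_0$, so $\Gamma_0'=\Gamma_0$, hence $I_t(\Gamma_0\backslash P)=I_t(\emptyset)=(0)$ and $\reg(S/I_t(\Gamma_0\backslash P))=0$. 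Substituting this into the first formula for $\Gamma_0$ gives the second. The third formula is the exact analogue when $x_{i_0}$ does not exist: run the same argument with $\Gamma_1$ in place of $\Gamma_0$, using the $x_{i_0}$-absent versions of Lemma \ref{lem.subtree} and Corollary \ref{cor.reg} (in which the summand $(x_{i_0})$ and the $j=0$ summand $I_t(\Delta_0\backslash P)$ are absent, so the sum runs over $j=1,\dots,t-1$), and noting that $I_t(\Gamma_1\backslash P)=I_t(\Gamma_1\backslash\Gamma_1)=(0)$.

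I do not expect a conceptual obstacle---everything rests on Theorem \ref{nocancellation}, which is already in hand. The step that requires care is the bookkeeping behind Corollary \ref{cor.reg}: one must verify that $I_t(\Gamma\backslash\Gamma_0)$, $(x_{i_0})$, and the ideals $I_{t-j}(\Delta_j\backslash P)$ involve pairwise disjoint sets of variables, so that the minimal free resolution of their sum is the tensor product of the individual resolutions and regularities add (with $\reg(S/(x_{i_0}))=0$). The delicate point is that this disjointness uses the identification of $I_t(\Gamma\backslash\{x_{i_0},\dots,x_{i_t}\})$ with $I_t(\Gamma\backslash\Gamma_0)$ from Remark \ref{rmk.recursive}, so that generators coming from branches at $x_{i_0}$ off the spine are recorded once---in the $j=0$ term---and not again in the ambient term; and it is this same identification, specialized to $\Gamma_0$ (resp.\ $\Gamma_1$), that makes the ambient term vanish in the last two formulas.
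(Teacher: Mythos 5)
Your argument is correct and is essentially the paper's own proof: Theorem \ref{nocancellation}'s Betti-number recursion gives $\reg(S/I_t(\Gamma)) = \max\{\reg(S/I_t(\Gamma\backslash x_{i_t})),\ \reg(S/(I_t(\Gamma\backslash x_{i_t}):(x_{i_1}\cdots x_{i_t})))+(t-1)\}$, Corollary \ref{cor.reg} is then substituted, and the last two displays follow because the ambient term vanishes when $\Gamma$ is replaced by $\Gamma_0$ (resp.\ $\Gamma_1$). Your extra care in invoking Remark \ref{rmk.recursive} to read $I_t(\Gamma_0\backslash P)$ as $I_t(\Gamma_0\backslash\Gamma_0)=(0)$ is exactly the intended justification behind the paper's terse claim that $I_t(\Gamma_0\backslash P)=(0)$, so nothing is missing.
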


\begin{proof} It follows from Theorem \ref{nocancellation} that
$$\reg(S/I_t(\Gamma)) = \max\{ \reg(S/I_t(\Gamma \backslash x_{i_t})), \reg(S/I_t(\Gamma \backslash x_{i_t}): (x_{i_1} \cdots x_{i_t})) + (t-1)\}.$$
The first conclusion follows by applying Corollary \ref{cor.reg}. The second conclusion follows by observing that since $x_{i_t}$ is a leaf at the highest level, we have $I_t(\Gamma_0 \backslash P) = (0)$ (or $I_t(\Gamma_1 \backslash P) = (0)$ if $x_{i_0}$ does not exist).
\end{proof}

We are ready to prove our next theorem.

\begin{theorem} \label{thm.reg}
Let $\Gamma$ be a rooted tree over the vertex set $V = \{x_1, \dots, x_n\}$. Then
$$\reg(S/I_t(\Gamma)) \le (t-1)[l_t(\Gamma) + p_t(\Gamma)].$$
\end{theorem}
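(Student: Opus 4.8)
The plan is to induct on the number of vertices $n$ of $\Gamma$ (equivalently, on the height plus the total size), using the recursive structure laid out in Corollary~\ref{cor.smalltree}. The base cases are the situations where the bound is trivial: if $\height(\Gamma) < t-1$ then $I_t(\Gamma) = (0)$ and $\reg(S/I_t(\Gamma)) = 0$, while $l_t(\Gamma) = p_t(\Gamma) = 0$, so the inequality holds. For the inductive step, assume $h = \height(\Gamma) \ge t-1$, pick a leaf $x_{i_t}$ of highest level, let $x_{i_1}, \dots, x_{i_t}$ be the path terminating at it, let $x_{i_0}$ be the parent of $x_{i_1}$ if it exists, and set $P$, $\Gamma_j$, $\Delta_j$ as in the statement of Corollary~\ref{cor.smalltree}. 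By that corollary,
\[
\reg(S/I_t(\Gamma)) = \max\Big\{\reg(S/I_t(\Gamma \backslash x_{i_t})),\ \reg(S/I_t(\Gamma \backslash P)) + \sum_{j=0}^{t-1} \reg(S/I_{t-j}(\Delta_j \backslash P)) + (t-1)\Big\},
\]
so it suffices to bound each of the two quantities inside the maximum by $(t-1)[l_t(\Gamma) + p_t(\Gamma)]$.

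The first quantity, $\reg(S/I_t(\Gamma \backslash x_{i_t}))$, is handled directly by the induction hypothesis applied to $\Gamma \backslash x_{i_t}$: we would check that $l_t(\Gamma \backslash x_{i_t}) \le l_t(\Gamma)$ and $p_t(\Gamma \backslash x_{i_t}) \le p_t(\Gamma)$. Removing the single leaf $x_{i_t}$ cannot create new leaves of level $\ge t-1$ except possibly $x_{i_{t-1}}$, but $x_{i_{t-1}}$ has level $t-2 < t-1$ (since $x_{i_t}$ has level exactly $h$ and $x_{i_1}$ has level $1$ when $x_{i_0}$ exists, or level $0$ when it does not; in the latter edge case one argues separately that $\Gamma_1 = \Gamma$ and height is exactly $t-1$), so $l_t$ does not increase; and deleting a vertex cannot increase the maximum number of disjoint length-$t$ paths. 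For the second quantity, the key point is that each summand $\reg(S/I_{t-j}(\Delta_j \backslash P))$ is controlled by the induction hypothesis applied to the components of the forest $\Delta_j \backslash P$ with the shorter path length $t-j$: one gets $\reg(S/I_{t-j}(\Delta_j \backslash P)) \le (t-j-1)[l_{t-j}(\Delta_j \backslash P) + p_{t-j}(\Delta_j \backslash P)] \le (t-1)[l_{t-j}(\Delta_j \backslash P) + p_{t-j}(\Delta_j \backslash P)]$, and similarly $\reg(S/I_t(\Gamma \backslash P)) \le (t-1)[l_t(\Gamma \backslash P) + p_t(\Gamma \backslash P)]$. (Strictly, to invoke the induction hypothesis for a shorter length one should formulate the induction over all pairs $(\Gamma, t)$ ordered by $n$; since each $\Delta_j \backslash P$ and $\Gamma \backslash P$ has strictly fewer vertices than $\Gamma$, this is legitimate.)

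The crux is therefore a purely combinatorial accounting: I must show
\[
l_t(\Gamma \backslash P) + \sum_{j=0}^{t-1} \big[l_{t-j}(\Delta_j \backslash P) + p_{t-j}(\Delta_j \backslash P)\big] + p_t(\Gamma \backslash P) + 1 \le l_t(\Gamma) + p_t(\Gamma).
\]
The ``$+1$'' absorbs the $(t-1)$ additive term after dividing through by $(t-1)$. For the leaf count: every leaf counted on the left lives in $\Gamma \backslash P$ or in some $\Delta_j \backslash P$, hence is a leaf of $\Gamma$ not among $x_{i_1}, \dots, x_{i_t}$; since removing the vertices of $P$ can only turn internal vertices into leaves of possibly \emph{lower} level — and a vertex becoming a leaf in $\Delta_j \backslash P$ after deleting $x_{i_{j+1}}$ had, in $\Gamma$, a descendant path reaching down to level $h$, so... — the honest work is to verify that the leaves counted with weight in the various $l_{t-j}(\Delta_j \backslash P)$ terms, together with those in $l_t(\Gamma \backslash P)$, are distinct leaves of $\Gamma$ of level $\ge t-1$, so their total is $\le l_t(\Gamma)$. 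For the path count, the ``$+1$'' is accounted for by the path $x_{i_1}, \dots, x_{i_{t+1}}$ — wait, rather $x_{i_0}, x_{i_1}, \dots, x_{i_t}$ is a path of length $t$ in $\Gamma$ (when $x_{i_0}$ exists) — which is disjoint from every path counted in $p_t(\Gamma \backslash P)$ and in the $p_{t-j}(\Delta_j \backslash P)$ terms, since those all avoid $P$; so one assembles a family of disjoint length-$t$ paths in $\Gamma$ of size $p_t(\Gamma \backslash P) + \sum_j p_{t-j}(\Delta_j \backslash P) + 1$. (When $x_{i_0}$ does not exist, the height is exactly $t-1$, the $\Delta_j \backslash P$ are small, and the bookkeeping simplifies; this edge case should be dispatched first.) I expect this disjointness/distinctness bookkeeping — especially making sure the leaves of level $\ge t-1$ and the disjoint length-$t$ paths don't get double-counted across the different $\Delta_j$ and across $\Gamma \backslash P$ — to be the main obstacle, while everything else is a routine application of Corollary~\ref{cor.smalltree} and the induction hypothesis.
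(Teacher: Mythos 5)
Your skeleton (induct on the number of vertices, apply Corollary~\ref{cor.smalltree}, bound each branch of the max, and reduce to a combinatorial inequality in the $l$'s and $p$'s) is exactly the paper's, but the decisive step --- the combinatorial accounting --- is precisely what you leave unverified, and as you have set it up it cannot be verified, because the inequality you reduce to is false. The trouble is that $I_t(\Gamma\backslash P)$ overlaps with the $j=0$ term $I_t(\Delta_0\backslash P)$: a path of length $t-1$ lying under a child of $x_{i_0}$ other than $x_{i_1}$ is counted by both, so its leaf is counted twice on your left-hand side. Concretely, take $t=2$ and let $\Gamma$ be the spider with root $r$ and three directed paths $r\rightarrow a\rightarrow b$, $r\rightarrow c_1\rightarrow d_1$, $r\rightarrow c_2\rightarrow d_2$. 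Choosing $x_{i_2}=b$, so $P=\{r,a,b\}$, one finds $l_2(\Gamma\backslash P)=2$ and $l_2(\Delta_0\backslash P)=2$ (both counting $d_1,d_2$), while all the $p$-terms on the left vanish; your inequality reads $2+2+0+1\le l_2(\Gamma)+p_2(\Gamma)=3+1$, i.e.\ $5\le 4$, which fails. The paper avoids this by observing (Remark~\ref{rmk.recursive}) that, since $x_{i_t}$ has highest level, the first summand in the colon decomposition may be taken to be $I_t(\Gamma\backslash\Gamma_0)$, and then doing the bookkeeping against $\Gamma_0$: $l_t(\Gamma\backslash\Gamma_0)\le l_t(\Gamma)-l_t(\Gamma_0)+1$ (the $+1$ covering the parent of $x_{i_0}$, which can become a new leaf of level $\ge t-1$ when $h\ge 2t$ --- so your claim that every counted leaf is a leaf of $\Gamma$ is also not right), $\sum_{j}l_{t-j}(\Delta_j\backslash P)\le l_t(\Gamma_0)-1$, $p_t(\Gamma_0)=1$ hence $p_t(\Gamma\backslash\Gamma_0)\le p_t(\Gamma)-1$, and $p_{t-j}(\Delta_j\backslash P)=0$.

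Two further points. Your path bookkeeping conflates lengths: the paths counted by $p_{t-j}(\Delta_j\backslash P)$ have length $t-j$, not $t$, so they cannot be ``assembled'' together with $x_{i_0},\dots,x_{i_t}$ into a disjoint family of length-$t$ paths in $\Gamma$; the correct observation is that $\height(\Delta_j\backslash P)\le t-j-1$ forces $p_{t-j}(\Delta_j\backslash P)=0$, which is what makes the $+1$ absorbable by $p_t(\Gamma)$ alone. Also, in the first branch of the max your justification that $l_t$ does not increase is off: when $h>t-1$ the vertex $x_{i_{t-1}}$ has level $h-1\ge t-1$, not $t-2$; the count still does not increase, but only because the deleted leaf $x_{i_t}$ was itself counted in $l_t(\Gamma)$, so at worst one counted leaf is traded for another. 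Finally, the paper treats $h=t-1$ (the case where $x_{i_0}$ does not exist) by a separate induction on $t$, using $I_t(\Gamma)=x_1I_{t-1}(\Gamma\backslash x_1)$ and $l_{t-1}(\Gamma\backslash x_1)=l_t(\Gamma)\ge 1$; your route through Corollary~\ref{cor.smalltree} can be made to work there (then $p_t(\Gamma)=0$ and the $+1$ must instead be absorbed via $\sum_{j\ge 1}l_{t-j}(\Delta_j\backslash P)+1\le l_t(\Gamma)$, using that $x_{i_t}$ is a counted leaf lying in no $\Delta_j\backslash P$), but this needs to be written out rather than asserted as ``the bookkeeping simplifies.''
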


\begin{proof} We shall use induction on both $t$ and $n$. For $t = 1$, the ideal $I_t(\Gamma)$ is the maximal homogeneous ideal of $S = k[x_1, \dots, x_n]$, and the assertion is clearly true. Assume that $t \ge 2$. The assertion is also true if $n \le t$, so we may assume that $n > t$.

Let $h = \height(\Gamma)$. Observe that if $h < t-1$ then $I_t(\Gamma) = (0)$, making the assertion vacuous. We shall assume that $h \ge t-1$. Consider first the case when $h = t-1$. In this case, any path of length $(t-1)$ in $\Gamma$ must be from the root to a leaf (at level $(t-1)$) of $\Gamma$ and so $p_t(\Gamma) = 0$. Without loss of generality, assume that $x_1$ is the root of $\Gamma$. Then $I_t(\Gamma) = x_1I_{t-1}(\Gamma \backslash x_1)$ and $p_{t-1}(\Gamma \backslash x_1) = 0$.
By the induction hypothesis, we have
$$\reg(S/I_{t-1}(\Gamma \backslash x_1)) \le (t-2)[l_{t-1}(\Gamma \backslash x_1)+p_{t-1}(\Gamma \backslash x_1)] = (t-2)l_{t-1}(\Gamma \backslash x_1).$$
Observe further that $l_{t-1}(\Gamma \backslash x_1) = l_t(\Gamma) \ge 1$ (since $h = t-1$, $\Gamma$ must have at least a leaf at level $(t-1)$). Therefore, we have
$$\reg(S/I_t(\Gamma)) = \reg(S/I_{t-1}(\Gamma \backslash x_1)) + 1 \le (t-2)l_{t-1}(\Gamma \backslash x_1)+1 \le (t-1) l_t(\Gamma),$$
and the assertion is true.

Consider now the case when $h \ge t$. Let $x_t$ be a leaf at the highest level, and let $x_{i_0}, \dots, x_{i_t}$ be the unique path of length $t$ terminating at $x_{i_t}$. Let $P = \{x_{i_0}, \dots, x_{i_t}\}$, let $\Gamma_j$ be the induced subtree of $\Gamma$ rooted at $x_{i_j}$, and let $\Delta_j = \Gamma_j \backslash \Gamma_{j+1}$ for $j=0,\ldots,t-1$. It follows from Corollary \ref{cor.smalltree} that
\begin{align}
\reg(S/I_t(\Gamma)) = \max \{ & \reg(S/I_t(\Gamma \backslash x_{i_t})), \nonumber \\
& \reg(S/I_t(\Gamma \backslash P)) + \sum_{j=0}^{t-1} \reg(S/I_{t-j}(\Delta_j \backslash P)) + (t-1)\}. \label{eq.induction1}
\end{align}
Observe that $l_t(\Gamma \backslash x_{i_t}) \le l_t(\Gamma)$ and $p_t(\Gamma \backslash x_{i_t}) \le p_t(\Gamma)$. Thus, by the induction hypothesis, we have
$$\reg(S/I_t(\Gamma \backslash x_{i_t})) \le (t-1)[l_t(\Gamma \backslash x_{i_t}) + p_t(\Gamma \backslash x_{i_t})] \le (t-1)[l_t(\Gamma) + p_t(\Gamma)].$$
It can also be seen that $l_t(\Gamma \backslash P) = l_t(\Gamma \backslash \Gamma_0) \le l_t(\Gamma) - \l_t(\Gamma_0) + 1$ and $\sum_{j=0}^{t-1} l_{t-j}(\Delta_j \backslash P) = l_t(\Gamma_0) - 1$. Thus, by the induction hypothesis, we have
\begin{align*}
\reg(S/I_t(\Gamma \backslash P)) + \sum_{j=0}^{t-1} \reg(S/I_{t-j}(\Delta_j \backslash P)) + t-1 & \le (t-1)[l_t(\Gamma \backslash \Gamma_0) + p_t(\Gamma \backslash \Gamma_0) \\
& \quad + \sum_{j=0}^{t-1} (l_{t-j}(\Delta_j \backslash P) + p_{t-j}(\Delta_j \backslash P))] + (t-1) \\
& = (t-1) l_t(\Gamma) + (t-1)[p_t(\Gamma \backslash \Gamma_0) \\
& \quad + \sum_{j=0}^{t-1} p_{t-j}(\Delta_j \backslash P)] + (t-1).
\end{align*}
Moreover, $p_t(\Gamma \backslash \Gamma_0) \le p_t(\Gamma) - p_t(\Gamma_0)$, $p_t(\Gamma_0) = 1$, and $p_{t-j}(\Delta_j \backslash P) = 0$ (because $\height(\Delta_j \backslash P) \le t-j-1$ for any $j$). Hence,
\begin{align*}
\reg(S/I_t(\Gamma \backslash P)) + \sum_{j=0}^{t-1} \reg(S/I_{t-j}(\Delta \backslash P)) + t-1 
& \le (t-1) l_t(\Gamma) + (t-1)[p_t(\Gamma) - 1] + (t-1) \\
& \le (t-1)[l_t(\Gamma) + p_t(\Gamma)].
\end{align*}
The theorem is proved by the use of \eqref{eq.induction1}.
\end{proof}

\begin{remark} The bound in Theorem \ref{thm.reg} is sharp when $\Gamma$ is a disjoint union of paths of length $(t-1)$. For instance, if $\Gamma$ is a directed path $x_1 \rightarrow x_2 \rightarrow \dots \rightarrow x_t$ of length $(t-1)$, then $l_t(\Gamma) = 1$ and $p_t(\Gamma) = 0$.  Hence, $\reg(S/I_t(\Gamma)) = \reg(S/(x_1 \cdots x_t)) = t-1 = (t-1)[l_t(\Gamma) + p_t(\Gamma)].$
\end{remark}


\section{Linear Strand and Linear Resolution} \label{sec.linear}

In this section, we compute the linear strand of $I_t(\Gamma)$ for a rooted tree $\Gamma$, and classify all rooted trees $\Gamma$ for which $I_t(\Gamma)$ has a linear resolution.

We start by investigating the linear strand of $I_t(\Gamma)$. Note that $\beta_{0,t}(I_t(\Gamma))$ is just the number of paths of length $(t-1)$ in $\Gamma$.  Therefore we will be interested in $\beta_{i,i+t}(I_t(\Gamma))$ for $i \ge 1$. Note also that the path ideal $I_t(\Gamma)$ can be realized as the edge ideal of a {\it hyper-tree}. In \cite{HVT1}, the second author and Van Tuyl gave a formula for the linear strand of the edge ideal of any hyper-tree. However, the structure of the hyper-tree corresponding to $I_t(\Gamma)$ is quite complicated. We shall use the combinatorial data of $\Gamma$ to provide an explicit formula for the linear strand of $I_t(\Gamma)$.

\begin{lemma} \label{lem.ls1}
Let $\Gamma$ be a rooted tree of height $h \ge t-1$, let $x_{i_t}$ be a leaf at the highest level in $\Gamma$, and let $x_{i_1}, \dots, x_{i_t}$ be the unique path of length $(t-1)$ terminating at $x_{i_t}$. Then for $i > 0$,
$$\beta_{i,i}(S/(I_t(\Gamma \backslash x_{i_t}): (x_{i_1} \cdots x_{i_t}))) = \begin{cases}
{\displaystyle {\deg_\Gamma(x_{i_{t-1}})-2 \choose i}} & \text{if  } h = t-1 \text{ and } t \not= 2 \\
{\displaystyle {\deg_\Gamma(x_{i_{t-1}})-1 \choose i}} & \text{if  } h > t-1 \text{ or } t = 2. \end{cases}$$
\end{lemma}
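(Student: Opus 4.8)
The plan is to apply Lemma~\ref{lem.subtree} to obtain an explicit description of the colon ideal $J := I_t(\Gamma \backslash x_{i_t}) : (x_{i_1} \cdots x_{i_t})$, and then to extract the graded Betti numbers $\beta_{i,i}(S/J)$ on the linear strand (in degree $i$, which corresponds to the linear part since the relevant pieces are generated by variables). By Lemma~\ref{lem.subtree},
\[
J = I_t(\Gamma \backslash \{x_{i_0}, \dots, x_{i_t}\}) + (x_{i_0}) + \sum_{j=0}^{t-1} I_{t-j}(\Delta_j \backslash \{x_{i_0}, \dots, x_{i_t}\}),
\]
and since $x_{i_t}$ is a leaf at the highest level, the first summand vanishes when we restrict attention to $\Gamma_0$ (or $\Gamma_1$); but here we are working inside $\Gamma$ itself, so I first need to reduce to the ``$\Gamma_0$'' situation. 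The key point is that the minimal free resolution of $S/J$ is the tensor product of the minimal free resolutions of the pieces (Remark~\ref{rmk.recursive}), and $\beta_{i,i}(S/J)$ picks out precisely the contribution coming from those tensor factors that are themselves resolved linearly in low degrees. Crucially, a squarefree monomial ideal contributes to $\beta_{i,i}(S/\cdot)$ only through its \emph{linear} (i.e.\ variable-generated) part: if $I_{t-j}(\Delta_j \backslash P)$ has any generator of degree $\ge 2$, that factor contributes nothing in internal degree equal to homological degree. Since $\Delta_j \backslash P$ has height $\le t-j-1$, the ideal $I_{t-j}(\Delta_j \backslash P)$ is $(0)$ except when the height is exactly $t-j-1$, in which case every path of length $t-j-1$ runs from the root of $\Delta_j$ to a leaf — so $I_{t-j}(\Delta_j \backslash P)$ is generated in degree $t-j \ge 2$ unless $t-j = 1$, i.e.\ $j = t-1$.

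So the only pieces of $J$ that can contribute to $\beta_{i,i}(S/J)$ are $(x_{i_0})$ and $I_1(\Delta_{t-1} \backslash P)$, the latter being the ideal generated by the variables corresponding to the vertices of $\Delta_{t-1} \backslash P = \Delta_{t-1} \backslash \{x_{i_0},\dots,x_{i_t}\}$. Now $\Delta_{t-1} = \Gamma_{t-1} \backslash \Gamma_t$ is the induced subtree rooted at $x_{i_{t-1}}$ with the branch through $x_{i_t}$ removed; since $x_{i_t}$ is a leaf at the highest level, $x_{i_{t-1}}$ has no descendants other than its children, so $\Delta_{t-1} \backslash P$ is exactly the set of children of $x_{i_{t-1}}$ other than $x_{i_t}$, a discrete set of $\deg_\Gamma(x_{i_{t-1}}) - 2$ vertices (subtracting one for $x_{i_t}$ and one for the parent $x_{i_{t-2}}$, the latter only being counted in the degree when $t \ge 3$, i.e.\ when $x_{i_{t-2}}$ exists and is distinct — this is the source of the case split on $t = 2$). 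When $t = 2$ there is no $x_{i_0}$ issue because then $x_{i_0}$ is the parent of $x_{i_1}$ but it is $x_{i_0}$, not $x_{i_{t-2}}$, that gets removed; I will need to track the indices carefully here. Similarly, $x_{i_0}$ exists precisely when $h > t-1$ (if $h = t-1$ then $x_{i_1}$ is the root and has no parent); this gives the case split on $h$: when $x_{i_0}$ exists, the factor $S/(x_{i_0})$ contributes an extra free summand, effectively adding $1$ to the count of linear generators, turning $\deg_\Gamma(x_{i_{t-1}})-2$ into $\deg_\Gamma(x_{i_{t-1}})-1$.

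Finally, I assemble the answer. The linear-strand-in-degree-$i$ part of $S/J$ comes from a factor $S/(y_1, \dots, y_m)$ where $y_1, \dots, y_m$ are the variables in $(x_{i_0}) + I_1(\Delta_{t-1} \backslash P)$; this is a Koszul complex, so $\beta_{i,i}(S/(y_1,\dots,y_m)) = \binom{m}{i}$. Counting: when $h > t-1$ or $t = 2$, we get $m = \deg_\Gamma(x_{i_{t-1}}) - 1$ (the children of $x_{i_{t-1}}$ minus $x_{i_t}$, minus the parent if it is among the removed vertices — here I must verify that when $t=2$ the parent $x_{i_0}=x_{i_{t-2}}$ coincidence is handled so that the count is still $\deg - 1$); when $h = t - 1$ and $t \ne 2$, there is no $x_{i_0}$ so $m = \deg_\Gamma(x_{i_{t-1}}) - 2$. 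The main obstacle, as usual in these index-bookkeeping arguments, is being completely careful about \emph{which} vertices coincide in the degenerate small cases — especially reconciling the role of $x_{i_0}$ (parent of $x_{i_1}$) versus $x_{i_{t-2}}$ (parent of $x_{i_{t-1}}$) when $t = 2$, where these are one and the same edge's endpoint — and verifying that no higher-degree generators sneak into $\beta_{i,i}$; I will want to justify the latter via the tensor-product structure and the observation that $\beta_{i,j}(S/I) = 0$ for $j < i$ always, with equality $j = i$ forcing linearity of the relevant strand.
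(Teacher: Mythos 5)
Your proposal follows essentially the same route as the paper's proof: decompose the colon ideal via Lemma~\ref{lem.subtree}, use the tensor-product structure of the minimal free resolution to see that only the variable-generated pieces $(x_{i_0})$ and $I_1(\Delta_{t-1}\backslash P)$ can contribute to $\beta_{i,i}$, and then read off $\binom{m}{i}$ from the Koszul complex, with the case split coming from whether $x_{i_0}$ (equivalently, the parent of $x_{i_{t-1}}$ when $t=2$) exists. The vertex counts you arrive at in each case agree with the paper's, so the argument is correct.
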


\begin{proof} Let $x_{i_0}$ be the parent of $x_{i_1}$ if it exists; and let $P = \{x_{i_0} \dots, x_{i_t}\}$.  Furthermore, for $j=0,\ldots,t-1$, let $\Gamma_j$  be the induced subtree of $\Gamma$ rooted at $x_{i_j}$, and let $\Delta_j = \Gamma_j \backslash \Gamma_{j+1}$. Observe, as before, that the minimal generators of the ideals $I_t(\Gamma \backslash P)$, $(x_{i_0})$, and $I_{t-j}(\Delta_j \backslash P)$ involve pairwise disjoint sets of vertices. Thus, by Lemma \ref{lem.subtree}, the minimal free resolution of $S/(I_t(\Gamma \backslash x_{i_t}): (x_{i_1} \cdots x_{i_t}))$ is the tensor product of the minimal free resolutions of $S/I_t(\Gamma \backslash P)$, $S/(x_{i_0})$, and $S/I_{t-j}(\Delta_j \backslash P)$ for $j=0, \dots, t-1$. Therefore, the contribution to $\beta_{i,i}(S/(I_t(\Gamma \backslash x_{i_t}): (x_{i_1} \cdots x_{i_t})))$ comes from $\beta_{i,i}(S/x_{i_0} \otimes S/I_1(\Delta_{t-1} \backslash P)) = \beta_{i,i}(S/((x_{i_0})+I_1(\Delta_{t-1} \backslash P)))$.

Observe that the number of vertices of $\Delta_{t-1} \backslash P$ is $\deg(x_{i_{t-1}}) - 2$ if $t > 2$ or $t = 2$ and $h > t-1$ (i.e., when $x_{i_{t-2}}$ exists). If $t=2$ and $h=t-1$ (i.e., when $x_{i_{t-2}}$ does not exist) then the number of vertices of $\Delta_{t-1} \backslash P$ is $\deg(x_{i_{t-1}})-1$. Observe further that $x_{i_0}$ exists if $h > t-1$. Hence, the conclusion follows from the fact that the minimal free resolution of $S/(x_{i_0}+I_1(\Delta_{t-1} \backslash P))$ is the Koszul complex.
\end{proof}

\begin{theorem} \label{thm.linearstrand}
Let $\Gamma$ be a rooted tree over the vertex set $V$. Then for $i \ge 1$,
$$\beta_{i,i+t}(I_t(\Gamma)) = \begin{cases}
{\displaystyle \sum_{\substack{v \in V}} {\deg_\Gamma(v) \choose i+1}} & \text{if  } t = 2 \\
{\displaystyle \sum_{\substack{\level(v) \ge t-1}} {\deg_\Gamma(v) \choose i+1} + \sum_{\substack{\level(v) = t-2}} {\deg_\Gamma(v) - 1 \choose i+1}} & \text{if  } t > 2.\end{cases}$$
\end{theorem}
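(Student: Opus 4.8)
The plan is to prove the formula by induction on the number of vertices $n$, using the recursive machinery of Theorem~\ref{nocancellation}. Given a rooted tree $\Gamma$ of height $h \ge t-1$ (if $h < t-1$ the ideal is zero and the formula holds trivially), pick a leaf $x_{i_t}$ at the highest level and let $x_{i_1}, \dots, x_{i_t}$ be the path of length $(t-1)$ terminating at it. Theorem~\ref{nocancellation} gives
$$\beta_{i,i+t}(S/I_t(\Gamma)) = \beta_{i,i+t}(S/I_t(\Gamma \backslash x_{i_t})) + \beta_{i-1,i}\big(S\big/I_t(\Gamma \backslash x_{i_t}):(x_{i_1}\cdots x_{i_t})\big),$$
and translating from $S/I_t(\Gamma)$ to $I_t(\Gamma)$ (which shifts homological degree by one) this reads $\beta_{i,i+t}(I_t(\Gamma)) = \beta_{i,i+t}(I_t(\Gamma \backslash x_{i_t})) + \beta_{i,i}\big(S\big/(I_t(\Gamma \backslash x_{i_t}):(x_{i_1}\cdots x_{i_t}))\big)$ for $i \ge 1$. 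The second summand is precisely what Lemma~\ref{lem.ls1} computes: it equals $\binom{\deg_\Gamma(x_{i_{t-1}})-2}{i}$ when $h=t-1$ and $t\neq 2$, and $\binom{\deg_\Gamma(x_{i_{t-1}})-1}{i}$ when $h>t-1$ or $t=2$. So the whole argument reduces to bookkeeping: showing that the claimed closed-form sum for $\Gamma$ exceeds the one for $\Gamma \backslash x_{i_t}$ by exactly this binomial correction.

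First I would handle the case $h = t-1$ separately. Here every path of length $(t-1)$ runs from the root to a leaf at level $t-1$, so deleting the leaf $x_{i_t}$ decreases $\deg_\Gamma(x_{i_{t-1}})$ by one while leaving all other degrees unchanged; the vertex $x_{i_{t-1}}$ sits at level $t-2$ (when $t>2$) or is the root at level $0$ (when $t=2$). Using the Pascal-type identity $\binom{d}{i+1} - \binom{d-1}{i+1} = \binom{d-1}{i}$, one checks that the difference of the two closed forms is exactly $\binom{\deg_\Gamma(x_{i_{t-1}})-1}{i}$ when $t=2$ (matching the $t=2$ branch of Lemma~\ref{lem.ls1}) and $\binom{\deg_\Gamma(x_{i_{t-1}})-2}{i}$ when $t>2$, since in the $t>2$ formula the vertex $x_{i_{t-1}}$ contributes through the $\level(v)=t-2$ sum with a $\deg-1$ in the binomial. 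The bulk of the work is then the case $h \ge t$: again deleting $x_{i_t}$ only changes the degree of $x_{i_{t-1}}$, dropping it by one. Now $x_{i_{t-1}}$ has level at least $t-1$ (since $x_{i_t}$ at the highest level $h\ge t$ forces $\level(x_{i_{t-1}}) = h - 1 \ge t-1$), so it always contributes through the "$\level(v)\ge t-1$" term $\binom{\deg_\Gamma(v)}{i+1}$ in both the $t=2$ and $t>2$ formulas, and the Pascal identity gives a difference of $\binom{\deg_\Gamma(x_{i_{t-1}})-1}{i}$ — exactly the Lemma~\ref{lem.ls1} value in the $h>t-1$ regime. Assembling these cases with the inductive hypothesis applied to $\Gamma \backslash x_{i_t}$ completes the proof; the base case $n \le t$ (where $\Gamma$ is a single path, if nonempty) is immediate.

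The step I expect to be most delicate is the careful tracking of which closed-form bucket the vertex $x_{i_{t-1}}$ falls into as $h$ and $t$ vary, and verifying that the combinatorial identity always produces precisely the binomial coefficient output by Lemma~\ref{lem.ls1} — in particular the sign-of-$t$ boundary at $t=2$, where $x_{i_{t-1}}$ may be the root and the "$\level(v)=t-2$" sum is either absent or refers to level $0$. A secondary subtlety is making sure the homological reindexing between $S/I_t(\Gamma)$ and $I_t(\Gamma)$, and the restriction to $i \ge 1$, line up correctly with Lemma~\ref{lem.ls1}, whose statement is for $S/(I_t(\Gamma\backslash x_{i_t}):(x_{i_1}\cdots x_{i_t}))$ in homological degree $i > 0$. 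Once these edge cases are pinned down, the induction is purely formal.
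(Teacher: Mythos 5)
Your proposal is correct and follows essentially the same route as the paper's own proof: induction on the number of vertices, removing a leaf $x_{i_t}$ at the highest level, applying Theorem~\ref{nocancellation} together with Lemma~\ref{lem.ls1} for the correction term, and then using Pascal's identity to track how the single changed degree $\deg(x_{i_{t-1}})$ (split into the cases $h=t-1$ versus $h>t-1$ and $t=2$ versus $t>2$) reconciles the closed forms for $\Gamma$ and $\Gamma\backslash x_{i_t}$. The bookkeeping details you flag (the level bucket of $x_{i_{t-1}}$, the vanishing contribution $\binom{1}{i+1}=0$ of the deleted leaf, and the homological reindexing) are exactly the steps carried out in the paper.
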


\begin{proof} Let $h = \height \Gamma$. The assertion is vacuously true if $h < t-1$ so we may assume that $h \ge t-1$. We shall use induction on $n$, the number of vertices in $\Gamma$. Again, the assertion is vacuously true if $n = t$. Assume that $n > t$. Let $x_{i_t}$ be a leaf at the highest level in $\Gamma$, and let $x_{i_1}, \dots, x_{i_t}$ be the unique path of length $(t-1)$ terminating at $x_{i_t}$. For simplicity, let $\Gamma' = \Gamma \backslash x_{i_t}$. As observed before, $\beta_{i,i+t}(I_t(\Gamma)) = \beta_{i+1,i+t}(S/I_t(\Gamma))$. By Theorem \ref{nocancellation}, we have
$$\beta_{i+1,i+t}(S/I_t(\Gamma)) = \beta_{i+1,i+t}(S/I_t(\Gamma')) + \beta_{i,i}(S/(I_t(\Gamma \backslash x_{i_t}) : (x_{i_1} \cdots x_{i_t}))).$$

If $t = 2$ then by the induction hypothesis and Lemma \ref{lem.ls1}, we have
$$\beta_{i+1,i+2}(S/I_2(\Gamma)) = \sum_{v \in V \backslash x_{i_2}} {\deg_{\Gamma'}(v) \choose i+1} + {\deg_\Gamma(x_{i_1}) - 1 \choose i}.$$
Since $\deg_{\Gamma'}(v) = \deg_\Gamma(v)$ for all $v \not\in \{x_{i_1}, x_{i_2}\}$, $\deg_{\Gamma'}(x_{i_1}) = \deg_\Gamma(x_{i_1}) - 1$, and $\deg_\Gamma(x_{i_2}) = 1$, we have
\begin{align*}
\beta_{i+1,i+2}(S/I_2(\Gamma)) & = \sum_{\substack{v \not= x_{i_1} \\ v \not= x_{i_2}}} {\deg_{\Gamma'}(v) \choose i+1} + {\deg_{\Gamma'}(x_{i_1}) \choose i+1} + {\deg_\Gamma(x_{i_1}) - 1 \choose i} \\
& = \sum_{\substack{v \not= x_{i_1} \\ v \not= x_{i_2}}} {\deg_{\Gamma}(v) \choose i+1} + {\deg_{\Gamma}(x_{i_1}) - 1 \choose i+1} + {\deg_\Gamma(x_{i_1}) - 1 \choose i} \\
& = \sum_{\substack{v \not= x_{i_1} \\ v \not= x_{i_2}}} {\deg_{\Gamma}(v) \choose i+1} + {\deg_{\Gamma}(x_{i_1}) \choose i+1} \\
& = \sum_{v \not= x_{i_1}} {\deg_\Gamma(v) \choose i+1} = \sum_{v \in V} {\deg_{\Gamma}(v) \choose i+1}.
\end{align*}
Here, the last equality follows by adding $0 = {1 \choose i+1}$.

Assume now that $t > 2$. Consider the case when $h = t-1$. By the induction hypothesis and Lemma \ref{lem.ls1}, we have
\begin{align*}
\beta_{i+1,i+t}(S/I_t(\Gamma)) & = \beta_{i+1,i+t}(S/I_t(\Gamma')) + {\deg_\Gamma(x_{i_{t-1}})-2 \choose i} \\
& = \sum_{\substack{v \not= x_{i_t} \\ \level(v) \ge t-1}} {\deg_{\Gamma'}(v) \choose i+1} + \sum_{\substack{v \not= x_{i_t} \\ \level(v) = t-2}} {\deg_{\Gamma'}(v) - 1 \choose i+1} + {\deg_\Gamma(x_{i_{t-1}})-2 \choose i}.
\end{align*}
Since $h = t-1$ and $x_{i_t}$ is at the highest level, we have $\level(x_{i_{t-1}}) = t-2$. Also, $\deg_{\Gamma'}(v) = \deg_\Gamma(v)$ for all $v \not\in \{x_{i_{t-1}}, x_{i_t}\}$ and $\deg_{\Gamma'}(x_{i_{t-1}}) = \deg_\Gamma(x_{i_{t-1}})-1$. Thus,
\begin{align*}
\beta_{i+1,i+t}(S/I_t(\Gamma)) & = \sum_{\substack{v \not= x_{i_t} \\ \level(v) \ge t-1}} {\deg_{\Gamma'}(v) \choose i+1} + \sum_{\substack{v \not= x_{i_t}, v \not= x_{i_{t-1}} \\ \level(v) = t-2}} {\deg_{\Gamma'}(v) - 1 \choose i+1} \\
& \quad \quad + {\deg_{\Gamma'}(x_{i_{t-1}}) - 1 \choose i+1} + {\deg_\Gamma(x_{i_{t-1}})-2 \choose i} \\
& = \sum_{\substack{v \not= x_{i_t} \\ \level(v) \ge t-1}} {\deg_{\Gamma}(v) \choose i+1} + \sum_{\substack{v \not= x_{i_t}, v \not= x_{i_{t-1}} \\ \level(v) = t-2}} {\deg_{\Gamma}(v) - 1 \choose i+1} \\
& \quad \quad + {\deg_{\Gamma}(x_{i_{t-1}}) -2 \choose i+1} + {\deg_\Gamma(x_{i_{t-1}})-2 \choose i} \\
& = \sum_{\substack{v \not= x_{i_t} \\ \level(v) \ge t-1}} {\deg_{\Gamma}(v) \choose i+1} + \sum_{\substack{v \not= x_{i_t}, v \not= x_{i_{t-1}} \\ \level(v) = t-2}} {\deg_{\Gamma}(v) - 1 \choose i+1} + {\deg_\Gamma(x_{i_{t-1}})-1 \choose i+1}.
\end{align*}
As before, we can add $0 = {1 \choose i+1}$ to the sum and the assertion follows.

Now we consider the case $h > t-1$. By the induction hypothesis and Lemma \ref{lem.ls1}, we have
\begin{align*}
\beta_{i+1,i+t}(S/I_t(\Gamma)) & = \beta_{i+1,i+t}(S/I_t(\Gamma')) + {\deg_\Gamma(x_{i_{t-1}})-1 \choose i} \\
& = \sum_{\substack{v \not= x_{i_t} \\ \level(v) \ge t-1}} {\deg_{\Gamma'}(v) \choose i+1} + \sum_{\substack{v \not= x_{i_t} \\ \level(v) = t-2}} {\deg_{\Gamma'}(v) - 1 \choose i+1} + {\deg_\Gamma(x_{i_{t-1}})-1 \choose i}.
\end{align*}
Since $h > t-1$ and $\level(x_{i_t}) = h$, we have $\level(x_{i_{t-1}}) \ge t-1$. As before, $\deg_{\Gamma'}(v) = \deg_\Gamma(v)$ for all $v \not\in \{x_{i_{t-1}}, x_{i_t}\}$ and $\deg_{\Gamma'}(x_{i_{t-1}}) = \deg_\Gamma(x_{i_{t-1}})-1$. Thus, using a similar computation as above, we get
\begin{align*}
\beta_{i+1,i+t}(S/I_t(\Gamma)) & = \sum_{\substack{v \not= x_{i_t}, v \not= x_{i_{t-1}} \\ \level(v) \ge t-1}} {\deg_{\Gamma'}(v) \choose i+1} + \sum_{\level(v) = t-2} {\deg_{\Gamma'}(v)-1 \choose i+1} \\
& \quad \quad + {\deg_{\Gamma'}(x_{i_{t-1}}) \choose i+1} + {\deg_\Gamma(x_{i_{t-1}}) - 1 \choose i} \\
& = \sum_{\level(v) \ge t-1} {\deg_{\Gamma}(v) \choose i+1} + \sum_{\level(v) = t-2} {\deg_{\Gamma}(v) - 1 \choose i+1}.
\end{align*}
\end{proof}

The rest of the section is devoted to classifying all rooted trees $\Gamma$ for which $I_t(\Gamma)$ has a linear resolution. Recall that $I_t(\Gamma)$ has a linear resolution if and only if $\beta_{i,j}(I_t(\Gamma)) = 0$ for all $j \not= i+t$. Our characterization is based on the following special class of rooted trees.

\begin{definition} \label{defn.broom}
A {\it broom graph} of type $t$ consists of a {\it handle}, which is a directed path $x_0, \dots, x_s$, such that every edge in $\Gamma$ (not on the handle) has the form $(x_i, y)$ for some $i \ge s-t$.
\end{definition}

    \begin{minipage}[h]{1.5in}
            \hspace*{10ex}\includegraphics{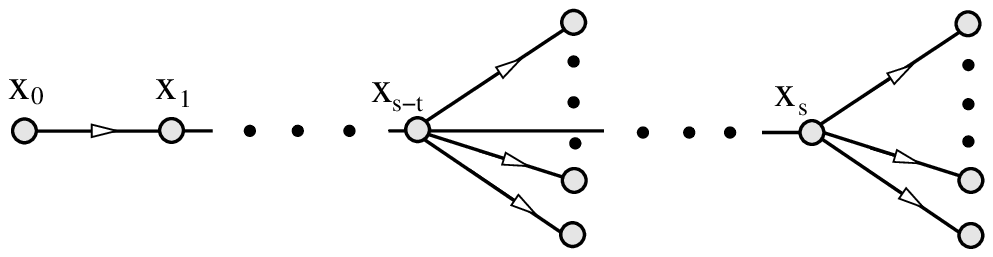}
    \end{minipage}

Since $t$ is usually fixed in our context, we often omit the phrase ``of type $t$'' and refer to a broom graph of type $t$ simply as a broom graph. Note also that in a broom graph as defined $s$ is related to the height of the graph.

\begin{remark} If $x$ is a leaf of level strictly less than $(t-1)$, then $I_t(\Gamma \backslash x)$ and $I_t(\Gamma)$ have the same generators (in different polynomial rings). This implies that $\beta_{i,j}(I_t(\Gamma \backslash x)) = \beta_{i,j}(I_t(\Gamma))$ for all $i,j$. Thus we can successively remove leaves at level strictly less than $(t-1)$ from a rooted tree without changing the graded Betti numbers of its path ideal. We call this process the {\it cleaning} process of $\Gamma$. The rooted tree obtained after the cleaning process is called the {\it clean form} of $\Gamma$, denoted $C(\Gamma)$.
\end{remark}

We are now ready to state our characterization.

\begin{theorem} \label{thm.Ntp}
Let $\Gamma$ be a rooted tree of height $h \ge t-1$. Then the following are equivalent:
\begin{enumerate}
\item $I_t(\Gamma)$ has linear first syzygies.
\item $I_t(\Gamma)$ has a linear resolution.
\item $C(\Gamma)$ is a broom graph of height at most $(2t-1)$.
\end{enumerate}
\end{theorem}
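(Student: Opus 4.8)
The plan is to prove the cyclic chain of implications $(3) \Rightarrow (2) \Rightarrow (1) \Rightarrow (3)$, using the recursive Betti number formula (Theorem~\ref{nocancellation}, Lemma~\ref{lem.subtree}) and the linear strand formula (Theorem~\ref{thm.linearstrand}) as the main tools. The implication $(2) \Rightarrow (1)$ is immediate from the definitions. Since the cleaning process does not change the graded Betti numbers of the path ideal, we may assume throughout that $\Gamma = C(\Gamma)$, i.e.\ $\Gamma$ has no leaves at level strictly less than $t-1$.

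For $(3) \Rightarrow (2)$: assume $\Gamma$ is a broom graph of type $t$ with handle $x_0, \dots, x_s$ and height at most $2t-1$. I would induct on $n$ using Theorem~\ref{nocancellation}, choosing $x_{i_t}$ to be a leaf at the highest level $h$. One checks that $\Gamma \backslash x_{i_t}$ is again (the clean form of) a broom graph of type $t$ of height at most $2t-1$, so by induction $S/I_t(\Gamma \backslash x_{i_t})$ has regularity $t-1$. By the mapping cone formula it then suffices to show that the colon ideal $I_t(\Gamma\backslash x_{i_t}) : (x_{i_1}\cdots x_{i_t})$ has a linear resolution generated in degree $1$, equivalently that $S/(I_t(\Gamma\backslash x_{i_t}) : (x_{i_1}\cdots x_{i_t}))$ has regularity $0$, i.e.\ this colon ideal is generated by variables. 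By Lemma~\ref{lem.subtree} this colon ideal is $I_t(\Gamma\backslash P) + (x_{i_0}) + \sum_{j=0}^{t-1} I_{t-j}(\Delta_j \backslash P)$; the point is that the broom condition, combined with $h \le 2t-1$, forces each $\Delta_j \backslash P$ to have height at most $t-j-1$ with all its paths of length $t-j-1$ going from a child of $x_{i_j}$ to a leaf, and since $\Gamma$ is clean these leaves are themselves at high enough level that the corresponding ideals $I_{t-j}(\Delta_j\backslash P)$ are generated by single variables (the leaves), while $I_t(\Gamma\backslash P) = I_t(\Gamma\backslash\Gamma_0) = (0)$ because the handle is short. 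So the colon ideal is generated by variables, and the mapping cone gives a linear resolution of $S/I_t(\Gamma)$ concentrated in degrees $\le t$; hence $\operatorname{reg}(S/I_t(\Gamma)) = t-1$ and $I_t(\Gamma)$ has a linear resolution.

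For $(1) \Rightarrow (3)$: assume $I_t(\Gamma)$ has linear first syzygies, i.e.\ $\beta_{1,j}(I_t(\Gamma)) = 0$ for $j \ne t+1$, and $\Gamma$ is clean; I want to show $\Gamma$ is a broom graph of height $\le 2t-1$. The strategy is contrapositive: if $\Gamma$ is clean but not a broom graph of height $\le 2t-1$, exhibit a nonlinear first syzygy, i.e.\ some $\beta_{1,j}(I_t(\Gamma)) \ne 0$ with $j \ge t+2$. There are two ways to fail: either $h \ge 2t$, or $h \le 2t-1$ but some edge $(x_i, y)$ off a longest handle has $i < s-t$ (equivalently, there exist two paths of length $t-1$ whose union is "spread out" enough that the corresponding two generators of $I_t(\Gamma)$ have no linear syzygy between them and cannot be linearly connected through other generators). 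In the first case one finds a directed path of length $\ge 2t-1$ and two disjoint length-$(t-1)$ subpaths along it, forcing a degree-$(2t)$ or larger syzygy; I'd make this precise either by a direct Koszul/Taylor-complex computation on the relevant subconfiguration or, more cleanly, by invoking Hochster's formula together with the fact that $\beta_{1,j}(I_t(\Gamma))$ is governed by the first reduced homology of certain subcomplexes of the Stanley–Reisner complex supported on $\le j$ vertices. Since $\beta_{i,j}(I_t(\Gamma)) = \beta_{i,j}(I_t(\Gamma'))$ for induced subtrees $\Gamma'$ (using Lemma~\ref{lem.ringextension} and restriction), it is enough to produce such a configuration inside a small induced subtree and compute there. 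I expect \emph{this direction to be the main obstacle}: one must show that \emph{every} clean non-broom tree of height $\le 2t-1$ genuinely carries a nonlinear first syzygy, and the delicate case is when the offending off-handle edge is attached at level exactly $s-t-1$, where one must verify that no chain of linear syzygies through intermediate generators can "repair" the gap — this requires a careful combinatorial analysis of which pairs of length-$(t-1)$ paths overlap in exactly $t-2$ vertices (these are the generators connected by a linear syzygy) and showing the resulting graph on the generators is disconnected in the relevant degree, or equivalently that a suitable subcomplex has nonvanishing $H_0$ or $H_1$.
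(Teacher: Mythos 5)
Your implication $(3)\Rightarrow(2)$ is essentially the paper's argument (induction on $n$, Theorem~\ref{nocancellation}, and Lemma~\ref{lem.subtree} showing the colon ideal is generated by variables), and $(2)\Rightarrow(1)$ is indeed trivial. The genuine gap is $(1)\Rightarrow(3)$: you never prove it, and you say so yourself (``I expect this direction to be the main obstacle''). The contrapositive strategy as sketched has concrete problems. First, the reduction step is misstated: for an induced subtree $\Gamma'$ of $\Gamma$ one only has the inequality $\beta_{i,j}(I_t(\Gamma'))\le\beta_{i,j}(I_t(\Gamma))$ (a restriction/Hochster argument for the squarefree ideal $I_t(\Gamma)\cap k[V(\Gamma')]=I_t(\Gamma')$), not the equality you assert, and this does not follow from Lemma~\ref{lem.ringextension}, which only concerns adding spare variables. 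Second, your dichotomy of how a clean tree can fail to be a broom of height at most $2t-1$ (``$h\ge 2t$, or an off-handle edge attached at $i<s-t$'') is incomplete: a tree can have height at most $2t-1$ and all attachments at admissible levels yet carry a hanging branch of depth at least $2$ off the handle (e.g.\ a handle of length $2t-2$ with a path of length $t-1$ attached at $x_{t-1}$), which is not a broom; this is exactly the delicate configuration you defer to ``a careful combinatorial analysis'' and it is where the real work lies. So as written the proposal establishes only $(3)\Rightarrow(2)\Rightarrow(1)$.

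It is worth noting how the paper closes the loop without any Hochster-type computation. From Theorem~\ref{nocancellation}, $I_t(\Gamma)$ has linear first syzygies (resp.\ a linear resolution) iff $I_t(\Gamma\backslash x_{i_t})$ does and the colon ideal $I_t(\Gamma\backslash x_{i_t}):(x_{i_1}\cdots x_{i_t})$ is generated in degree one (resp.\ generated in degree one with a linear resolution); but by Lemma~\ref{lem.subtree} ``generated in degree one'' means $I_t(\Gamma\backslash P)=(0)$ and $I_{t-j}(\Delta_j\backslash P)=(0)$ for $j<t-1$, in which case the colon is generated by variables and is automatically Koszul, so the two recursive conditions coincide and $(1)\Leftrightarrow(2)$ follows at once. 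Then $(2)\Rightarrow(3)$ is deduced from these same vanishing conditions: $I_t(\Gamma\backslash P)=(0)$ forces $\Gamma\backslash\Gamma_0$ to be a short path, and if some $\Delta_j\backslash P$ contained a path of positive length one successively strips leaves (each intermediate tree still has a linear resolution) until that branch produces a nonzero $I_s(\Delta''_{t-s}\backslash P'')$, a contradiction. Recasting your $(1)\Rightarrow(3)$ along these recursive lines would both repair the gap and dispose of the deep-branch case that your contrapositive analysis misses.
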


\begin{proof} Without loss of generality, we may assume that $\Gamma$ is already in its clean form, ie., $C(\Gamma) = \Gamma$. In this case, all leaves of $\Gamma$ have level at least $(t-1)$.

We shall use induction on $n$, the number of vertices in $\Gamma$. The statement is clearly true for $n \le t$ (since in this case $I_t(\Gamma)$ is either $(0)$ or $(x_1 \cdots x_t)$). Assume that $n > t$. As before, let $x_{i_t}$ be a leaf at the highest level in $\Gamma$, and let $x_{i_1}, \dots, x_{i_t}$ be the unique path of length $(t-1)$ terminating at $x_{i_t}$. Let $x_{i_0}$ be the parent of $x_{i_1}$ if it exists; and let $P = \{x_{i_0}, \dots, x_{i_t}\}$.  Furthermore, for $j=0,\ldots,t-1$, let $\Gamma_j$ be the induced subtree of $\Gamma$ rooted at $x_{i_j}$, let $\Delta_j = \Gamma_j \backslash \Gamma_{j+1}$, and let $\Gamma' = \Gamma \backslash x_{i_t}$. By Theorem \ref{nocancellation}, we have
$$\beta_{i+1,j}(S/I_t(\Gamma)) = \beta_{i+1,j}(S/I_t(\Gamma')) + \beta_{i,j-t}(S/(I_t(\Gamma'):(x_{i_1} \cdots x_{i_t}))).$$
This implies that $I_t(\Gamma)$ has linear first syzygies (resp., has a linear resolution) if and only if $I_t(\Gamma')$ has linear first syzygies (respectively, has a linear resolution), and $I_t(\Gamma'): (x_{i_1} \cdots x_{i_t})$ is generated in degree one (respectively, is generated in degree one and has a linear resolution).

By Lemma \ref{lem.subtree} the minimal free resolution of $S/(I_t(\Gamma'):(x_{i_1} \cdots x_{i_t}))$ is the tensor product of the minimal free resolutions of $S/I_t(\Gamma \backslash P)$, $S/(x_{i_0})$, and $S/I_{t-j}(\Delta_j \backslash P)$. It follows that $I_t(\Gamma'): (x_{i_1} \cdots x_{i_t})$ is generated in degree one if and only if $I_t(\Gamma \backslash P) = (0)$ and $I_{t-j}(\Delta_j \backslash P) = (0)$ for all $j < t-1$. Note that if this is the case then $I_t(\Gamma'): (x_{i_1} \cdots x_{i_t})$ has a linear resolution. Thus, (1) and (2) are equivalent.

Now we show that (2) $\Rightarrow$ (3). Since $\Gamma$ is in its clean form, if $I_t(\Gamma \backslash P) = (0)$ then $\Gamma \backslash \Gamma_0$ must be a directed path from the root of $\Gamma$ to the parent of $x_{i_0}$ of length strictly less than $(t-1)$. To show that $\Gamma$ is a broom, it now suffices to show that for any $j$, $\Delta_j \backslash P$ consists of isolated vertices. Suppose for some $j < t-1$, $\Delta_j \backslash P$ contains a path of length at least 1. In this case, we can find a path of length at least 2 of the form $x_{i_j},y_1, \dots ,y_s$ in $\Delta_j$ terminating at a leaf $y_s$ for some $2 \le s \le t-j$. Here, the second inequality is due to the fact that $x_{i_t}$ is of highest level. Recall that for $I_t(\Gamma)$ to have a linear resolution, $I_t(\Gamma') = I_t(\Gamma \backslash x_{i_t})$ must also have a linear resolution. Thus by induction and successively removing vertices, we will reduce $\Gamma$ to a rooted tree $\Gamma''$ in which $x_{i_{j+s}}$ is a leaf at the highest level and $I_t(\Gamma'')$ has a linear resolution. Let $\Delta_{t-s}''$ be the graph rooted at $x_{i_j}$ obtained from $\Gamma''$ in the same fashion as how $\Delta_j$ was obtained from $\Gamma$ (with $x_{i_{j+s}}$ replacing the role of $x_{i_t}$). Let $P''$ be the set of vertices on the unique path of length $(t-1)$ terminating at $x_{i_{j+s}}$ (this path exists since the level of $y_s$ is at least $(t-1)$). By a similar argument as with $\Delta_j$, for $I_t(\Gamma'')$ to have a linear resolution we must have $I_s(\Delta_{t-s}'' \backslash P'') = (0)$. However, this is not true since $y_1 \cdots y_s \in I_s(\Delta_{t-s}'' \backslash P'')$. We have now shown that for each $j < t-1$, $\Delta_j \backslash P$ consists of isolated vertices. Since $x_{i_t}$ is of highest level, $\Delta_{t-1} \backslash P$ also consists of isolated vertices. We can conclude that if $I_t(\Gamma)$ has a linear resolution, then $\Gamma$ is a broom of height at most $(2t-1)$.

Conversely, suppose that $\Gamma$ is a broom of height at most $(2t-1)$ in its clean form. By definition, it is easy to see that in this case, $\Gamma \backslash P$ consists of a path of length at most $(t-2)$ along with isolated vertices, and $\Delta_j \backslash P$ consists of isolated vertices for any $j$. Thus, $I_t(\Gamma \backslash P) = (0)$, $I_{t-j}(\Delta_j \backslash P) = (0)$ for all $j < t-1$, and $I_1(\Delta_{t-1} \backslash P)$ has a linear resolution. Moreover, since $\Gamma \backslash x_{i_t}$ is also a broom of height at most $(2t-1)$, by the induction hypothesis, $I_t(\Gamma \backslash x_{i_t})$ has a linear resolution. Thus, $I_t(\Gamma)$ has a linear resolution. Therefore, (3) $\Rightarrow$ (2).
\end{proof}

\begin{remark}  For any rooted tree $\Gamma$, a cellular complex supporting the linear strand of $I_t(\Gamma)$ is described by the minimal generators of $I_t(\Gamma)$; specifically, if $M_1, \dots, M_u$ are minimal generators of $I_t(\Gamma)$, then $\{M_1, \dots, M_u\}$ form a cell if and only if it is maximal with respect to the property that $$\deg\big(\operatorname{gcd}(M_1, \dots, M_u)\big) = t-1.$$ When $\Gamma$ is a broom graph of height at most $(2t-1)$, this cellular complex also supports the minimal free resolution of $I_t(\Gamma)$. In this case, set $D=\max\{\deg_\Gamma(x) ~|~ x\in\Gamma\}$ and let $v$ be a vertex of degree $D$ with highest level. Then it follows from Theorem~\ref{thm.linearstrand} that this cellular complex supporting the minimal free resolution of $I_t(\Gamma)$ has dimension (equivalently, the projective dimension of $I_t(\Gamma)$) equal to
$$\begin{cases}
        D-2 & \mbox{ if }t > 2 \mbox{ and } \level(v)=(t-2)\\
        D-1 & \mbox{ otherwise.}
    \end{cases}$$ 
\end{remark}


\section{Specialization to Path Graphs}\label{line_graphs}

In this last section of the paper, we restrict our attention to a simple class of rooted trees, namely path graphs. The \emph{path graph} over the vertex set $V = \{x_1, \dots, x_n\}$ is the directed tree whose directed edges (after a possible re-indexing) are $e_i = (x_i, x_{i+1})$ for $i = 1, \dots, n-1$.

    \begin{figure}[h]
        \centerline{
            \includegraphics{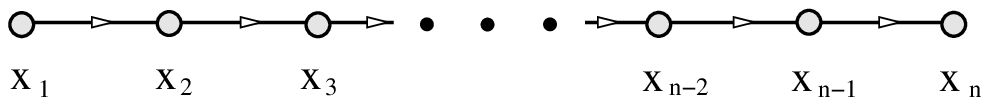}
        }
    \end{figure}

Let $L_n$ denote the path graph over $n$ vertices. Clearly,
$$I_t(L_n)=(x_1x_2\cdots x_t,~~x_2x_3\cdots x_{t+1},~~\dots,~~x_{n-t+1}x_{n-t+2}\cdots x_n).$$
It is also easy to see that
\begin{align*}
I_t(L_{n-1}):(x_{n-t+1}x_{n-t+2}\cdots x_n) =
                \begin{cases}
                (x_{n-t}) + I_t(L_{n-(t+1)}) & \text{if}~n>t\\
                0 & \text{otherwise}
                \end{cases}
\end{align*}
where we take $I_t(L_{n-(t+1)})=(0)$ if $n-(t+1)<t$.

Since the minimal resolution of $S/((x_{n-t})+I_t(L_{n-(t+1)}))$ is the tensor product of the minimal free resolutions for $S/(x_{n-t})$ and $S/I_t(L_{n-(t+1)})$, it follows from Theorem~\ref{nocancellation} that
\begin{align}
\beta_{i,j}(S/I_t(L_n)) &= \beta_{i,j}(S/I_t(L_{n-1})) + \beta_{i-1,j-t}(S/I_t(L_{n-1}):(x_{n-t+1}x_{n-t+2}\cdots x_n)) \nonumber \\
&= \beta_{i,j}(S/I_t(L_{n-1})) + \beta_{i-1,j-t}(S/I_t(L_{n-(t+1)}) + \beta_{i-2,j-t-1}(S/I_t(L_{n-(t+1)})). \label{eq.linegraph}
\end{align}

In \cite{heVanTuyl} He and Van Tuyl computed the projective dimension of $S/I_t(L_n)$. Using \eqref{eq.linegraph}, we can easily recover their formula.

\begin{corollary}\label{pdlinegraph}
Let $L_n$ be a path graph of over $n \ge t$ vertices.  Then the projective dimension of $S/I_t(L_n)$ is given by
\begin{align*}
\pd(S/I_t(L_n)) = \begin{cases}
                            \dfrac{2(n-d)}{t+1} &~\text{if}~~ n\equiv d \bmod(t+1) ~\text{for}~0\leq d\leq t-1\\
                            \dfrac{2n-(t-1)}{t+1} &~\text{if}~~ n\equiv t\bmod(t+1).
                            \end{cases}
\end{align*}
\end{corollary}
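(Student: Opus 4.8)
The plan is to set up a recursion for the projective dimension directly from the Betti number recursion \eqref{eq.linegraph} and then verify the closed formula by induction on $n$. Writing $p(n) := \pd(S/I_t(L_n))$, equation \eqref{eq.linegraph} expresses the Betti table of $S/I_t(L_n)$ as a sum of a shifted copy of the Betti table of $S/I_t(L_{n-1})$ together with two shifted copies of the Betti table of $S/I_t(L_{n-(t+1)})$, and since by Remark~\ref{rmk.recursive} (no cancellation) all Betti numbers in sight are nonzero in the relevant spots, the homological degrees simply add: the first summand contributes a top homological degree $p(n-1)$, and the two copies coming from $S/I_t(L_{n-(t+1)})$ contribute $p(n-(t+1))+1$ and $p(n-(t+1))+2$ respectively. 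Thus
\begin{align}
p(n) = \max\{\, p(n-1),\ p(n-(t+1))+2 \,\}, \label{eq.pdrec}
\end{align}
valid for $n > t$, with the boundary values $p(n) = 0$ for $t \le n \le t$ (i.e.\ $p(t) = 0$, since $I_t(L_t) = (x_1\cdots x_t)$ is principal) and $p(n) = 0$ whenever $n < 2t$ is such that $I_t(L_{n-(t+1)}) = (0)$ makes the second term drop out — more precisely, for $t \le n \le 2t$ one has $p(n) = 1$ as soon as $L_n$ has at least two generators, and $p(n)=0$ exactly when $n=t$. I would pin down these small cases carefully first, since they anchor the induction.

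Next I would solve the recursion \eqref{eq.pdrec} explicitly. Write $n = q(t+1) + d$ with $0 \le d \le t$. The recursion says $p$ increases by $2$ each time $n$ increases by $t+1$ (the $\max$ being dominated by the $p(n-(t+1))+2$ term once $n$ is large enough), and is constant on each block $q(t+1)+d$, $q(t+1)+d+1,\dots$ until the next multiple-of-$(t+1)$ jump — so $p(n)$ depends on $n$ only through $q$ and whether $d < t$ or $d = t$. Concretely, for $0 \le d \le t-1$ one gets $p(n) = 2q = \tfrac{2(n-d)}{t+1}$, and for $d = t$ one gets $p(n) = 2q+1 = \tfrac{2(q(t+1)+t) + 1 - (t+1) \cdot 0 \cdot \text{(adjust)}}{\cdots}$; rewriting $2q+1$ with $n = q(t+1)+t$ gives $2q+1 = \tfrac{2n - (t-1)}{t+1}$, which is exactly the claimed second case. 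The induction step is then routine: assuming the formula for all smaller indices, plug $p(n-1)$ and $p(n-(t+1))+2$ into \eqref{eq.pdrec} and check in the three regimes $d=0$, $1 \le d \le t-1$, $d=t$ that the maximum matches the asserted value (the only mildly delicate comparison is at $d = 0$, where $n-1$ has residue $t$ and so $p(n-1) = \tfrac{2(n-1)-(t-1)}{t+1} = \tfrac{2n-t-1}{t+1}$, while $p(n-(t+1))+2 = \tfrac{2(n-(t+1))}{t+1}+2 = \tfrac{2n}{t+1}$, and one confirms the latter is the larger and equals $\tfrac{2(n-0)}{t+1}$).

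The main obstacle I anticipate is not the arithmetic but justifying \eqref{eq.pdrec} rigorously, i.e.\ arguing that in \eqref{eq.linegraph} there is genuinely no homological-degree collision that could cause the total projective dimension to be something other than the max of the three contributions. This needs two observations: (i) by Theorem~\ref{nocancellation} and Remark~\ref{rmk.recursive} the resolution built by iterated mapping cones is minimal, so the Betti numbers of $S/I_t(L_n)$ are literally the sums appearing in \eqref{eq.linegraph} with no cancellation; and (ii) the extremal Betti number realizing $\pd$ in each of the three pieces sits in a nonzero slot, which again follows from the no-cancellation property applied inductively. Once (i) and (ii) are in hand, \eqref{eq.pdrec} is immediate and the rest is the bookkeeping sketched above. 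I would also double-check the edge convention "$I_t(L_{n-(t+1)}) = (0)$ if $n-(t+1) < t$" feeds correctly into the base of the induction, so that the formula's two cases meet up consistently at $n = t, t+1, \dots, 2t$.
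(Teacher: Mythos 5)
Your overall route is the same as the paper's: extract from \eqref{eq.linegraph} the recursion $\pd(S/I_t(L_n)) = \max\{\pd(S/I_t(L_{n-1})),\ \pd(S/I_t(L_{n-(t+1)}))+2\}$ (which is legitimate precisely because \eqref{eq.linegraph} is an identity of nonnegative integers coming from the minimal mapping-cone resolution, so no cancellation can occur and your observation (ii) is automatic), and then solve the recursion by induction on $n$; the paper does exactly this, deferring the arithmetic to He and Van Tuyl. Your recursion and the closed-form arithmetic, including the comparison at $d=0$, are correct.

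However, your base cases are wrong, and by your own account they anchor the induction. You claim $p(t)=0$ because $I_t(L_t)=(x_1\cdots x_t)$ is principal, and $p(n)=1$ for $t<n\le 2t$ whenever there are at least two generators; but $p(n)=\pd(S/I_t(L_n))$, not $\pd(I_t(L_n))$, and the quotient by a nonzero principal ideal has projective dimension $1$ (resolution $0\to S(-t)\to S\to S/I_t(L_t)\to 0$), while for $t<n\le 2t$ the colon ideal is the nonzero principal ideal $(x_{n-t})$, so the mapping cone contributes in homological degree $2$ and $p(n)=2$. Indeed the stated formula gives $1$ at $n=t$ (residue $d=t$, $q=0$) and $2$ for $t<n\le 2t$ ($q=1$), not your values. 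With your anchors the induction produces values that are one too small for every $n$ (for instance it would give $p(2t+1)=2$ instead of the correct $3$), so as written the argument does not prove the corollary. Replacing the anchors by $p(m)=0$ for $m<t$, $p(t)=1$, and $p(n)=2$ for $t<n\le 2t$ repairs the proof, and the rest of your bookkeeping goes through unchanged.
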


\begin{proof}  The recursive formula \eqref{eq.linegraph} gives
$$\pd(S/I_t(L_n)) = \max\{\pd(S/I_t(L_{n-1})), \pd(S/I_t(L_{n-(t+1)}))+2\}.$$
We can now proceed by using inducting on $n$.  Using the same line of arguments as in \cite[Theorem 4.1]{heVanTuyl}, the result follows.
\end{proof}

\begin{remark} \label{rmk.linegraph}
Corollary \ref{pdlinegraph}, in fact, gives us that for $n \not\equiv 0, t \bmod (t+1)$,
$$\pd(S/I_t(L_n)) = \pd(S/I_t(L_{n-1})) = \pd(S/I_t(L_{n-(t+1)}) + 2,$$
and for $n \equiv 0, t \bmod (t+1)$,
$$\pd(S/I_t(L_n)) = \pd(S/I_t(L_{n-1})) + 1 = \pd(S/I_t(L_{n-(t+1)}) + 2.$$
\end{remark}

Our last result characterizes which graded Betti numbers of $S/I_t(L_n)$ are nonzero.

\begin{theorem} \label{thm.nonzero}
Let $L_n$ be a path graph over $n \ge t$ vertices.  Then the following are equivalent:
\begin{enumerate}
\item $\beta_{i,j}(S/I_t(L_n)) \neq 0$,
\item $j-i = s(t-1)$ for some integer $s$ satisfying
$0\leq s \leq \min\{i, \left\lceil\frac{n-t+1}{t+1}\right\rceil\} \text{ and } i \leq \min \{2s, \pd(S/I_t(L_n))\}.$
\end{enumerate}
\end{theorem}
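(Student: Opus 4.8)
The plan is to prove this by induction on $n$, using the recursion \eqref{eq.linegraph} and the projective dimension facts in Remark~\ref{rmk.linegraph}. First I would reformulate condition (2): writing $j - i = s(t-1)$ records that the generator degree shift is ``$s$ steps up the linear strand'' in the sense that a nonzero Betti number in homological degree $i$ can only sit in internal degree $j = i + s(t-1)$; this is forced combinatorially because every minimal generator of $I_t(L_n)$ has degree $t$ and consecutive generators overlap in exactly $t-1$ variables, so syzygetic degrees accrue in increments of $t-1$. The base cases $n = t$ (where $I_t(L_n) = (x_1\cdots x_t)$, giving only $\beta_{0,0}$ and $\beta_{1,t}$, i.e. $s \in \{0,1\}$) and small $n$ are routine. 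For the inductive step, \eqref{eq.linegraph} expresses $\beta_{i,j}(S/I_t(L_n))$ as a sum of three terms: $\beta_{i,j}(S/I_t(L_{n-1}))$, $\beta_{i-1,j-t}(S/I_t(L_{n-(t+1)}))$, and $\beta_{i-2,j-t-1}(S/I_t(L_{n-(t+1)}))$. Since all three summands are nonnegative (no cancellation, by Theorem~\ref{nocancellation}), $\beta_{i,j}(S/I_t(L_n)) \neq 0$ if and only if at least one of the three is nonzero.

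Next I would translate the index shifts. Note $j - t - (i-1) = (j-i) - (t-1) = (s-1)(t-1)$ and $(j-t-1) - (i-2) = (j-i) - (t-1) = (s-1)(t-1)$, so both $L_{n-(t+1)}$-terms live at ``strand level'' $s-1$; the $L_{n-1}$-term stays at level $s$. So by the induction hypothesis the first term is nonzero iff $0 \le s \le \min\{i, \lceil (n-t)/(t+1)\rceil\}$ and $i \le \min\{2s, \pd(S/I_t(L_{n-1}))\}$; the second term is nonzero iff $0 \le s-1 \le \min\{i-1, \lceil (n-2t)/(t+1)\rceil\}$ and $i - 1 \le \min\{2(s-1), \pd(S/I_t(L_{n-(t+1)}))\}$; the third similarly with $i-2 \le \min\{2(s-1), \pd(S/I_t(L_{n-(t+1)}))\}$. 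The forward direction (2)$\Rightarrow$(1) then amounts to: given $s,i$ satisfying the $L_n$-constraints, show that at least one of these three index-sets of constraints is satisfied — and here is where the arithmetic of the ceiling functions and the three cases $n \equiv 0, t, \text{other} \pmod{t+1}$ from Remark~\ref{rmk.linegraph} must be checked carefully. The reverse direction (1)$\Rightarrow$(2) is easier: if any of the three summands is nonzero, its constraints (via the induction hypothesis and the relations $\pd(S/I_t(L_{n-1})) \le \pd(S/I_t(L_n))$ and $\pd(S/I_t(L_{n-(t+1)})) + 2 \le \pd(S/I_t(L_n)) + $ something, again from Remark~\ref{rmk.linegraph}) imply the $L_n$-constraints; e.g. from the third term, $i - 2 \le 2(s-1)$ gives $i \le 2s$, and $s - 1 \le i - 2$ gives $s \le i$.

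For the hard part: the genuine obstacle is the sufficiency direction in the boundary regime, where $s$ is close to $\lceil (n-t+1)/(t+1)\rceil$ or $i$ is close to $\pd(S/I_t(L_n))$ or $i$ is close to $2s$. In these extremal cases exactly one of the three summands survives, and one must show it does survive — this requires knowing the precise value of $\lceil (n-t+1)/(t+1)\rceil$ relative to $\lceil (n-t)/(t+1)\rceil$ and $\lceil (n-2t)/(t+1)\rceil$ (they differ by $0$ or $1$ depending on $n \bmod (t+1)$), and correspondingly which of the three terms inherits the top strand level or the top homological degree. I would organize this by fixing the residue $d = n \bmod (t+1)$ and splitting into the cases $d = 0$, $d = t$, and $1 \le d \le t-1$, matching the case split of Corollary~\ref{pdlinegraph} and Remark~\ref{rmk.linegraph}; in each case the maps $n \mapsto n-1$ and $n \mapsto n-(t+1)$ act predictably on the residue, and one tracks how the two upper bounds in (2) evolve. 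A useful auxiliary observation to extract first is that the constraint $i \le 2s$ together with $s \le i$ forces $s \le i \le 2s$, so the nonzero Betti numbers in each strand level $s$ occupy a contiguous range of homological degrees from $s$ up to $\min\{2s, \pd\}$; establishing this contiguity inductively (nonzero at $i$ and at $i+2$ implies nonzero at $i+1$, visible from the recursion) streamlines the extremal bookkeeping considerably.
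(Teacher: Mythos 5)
Your setup is the same as the paper's: induction on $n$ via the recursion \eqref{eq.linegraph}, nonnegativity of its three terms, the observation that the two $L_{n-(t+1)}$-terms sit at strand level $s-1$ while the $L_{n-1}$-term stays at level $s$, and the direction (1)$\Rightarrow$(2) read off from the induction hypothesis (note that $i\le\pd(S/I_t(L_n))$ is automatic once $\beta_{i,j}(S/I_t(L_n))\neq 0$, so no projective-dimension comparison is even needed there). The genuine gap is that the direction (2)$\Rightarrow$(1) --- which is the substance of the theorem --- is only described, not proved: you say the ceiling-function arithmetic ``must be checked carefully'' and sketch a plan organized by the residue $d=n\bmod(t+1)$, but you never exhibit, for a given admissible pair $(i,s)$, a specific term of \eqref{eq.linegraph} that is guaranteed to be nonzero. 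The paper does this with two pivots that your plan does not identify: if $s\le i-1$, put $s'=s-1$ and use the third term, since Remark~\ref{rmk.linegraph} gives $\pd(S/I_t(L_n))=\pd(S/I_t(L_{n-(t+1)}))+2$, so $i\le\pd(S/I_t(L_n))$ becomes $i-2\le\pd(S/I_t(L_{n-(t+1)}))$ and the induction hypothesis yields $\beta_{i-2,j-t-1}(S/I_t(L_{n-(t+1)}))\neq 0$; if $s=i$, use the second term, the key point being the inequality $\lceil\frac{n-t+1}{t+1}\rceil+1\le\pd(S/I_t(L_n))$ for $n>t$ (a short check in the two cases of Corollary~\ref{pdlinegraph}), which gives $i-1\le\pd(S/I_t(L_{n-(t+1)}))$ and hence $\beta_{i-1,j-t}(S/I_t(L_{n-(t+1)}))\neq 0$. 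Note that the $L_{n-1}$-term is never needed for sufficiency. Without these (or equivalent) estimates the boundary regime you yourself flag remains unhandled, so the proposal is a plan rather than a proof.

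Two smaller points. The auxiliary ``contiguity'' lemma you want to lean on (nonvanishing at $i$ and at $i+2$ in a fixed strand forces nonvanishing at $i+1$, ``visible from the recursion'') is not visible from the recursion: \eqref{eq.linegraph} compares different path graphs at shifted strand levels, and an interpolation statement in $i$ for fixed $n$ would itself require essentially the full induction; as used, it is an unproven claim. Similarly, the opening assertion that $j-i$ must be a multiple of $t-1$ because consecutive generators overlap in $t-1$ variables is a heuristic, not an argument; in this framework that divisibility is exactly what the direction (1)$\Rightarrow$(2) of the induction establishes, so it should be derived there rather than assumed.
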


\begin{proof} The statement can be easily verified for $n = t$. We shall assume that $n > t$ and use induction on $n$.

We shall first show that (1) $\Rightarrow$ (2). To have $\beta_{i,j}(S/I_t(L_n)) \not= 0$, clearly $i \le \pd(S/I_t(L_n))$. It suffices to show that $j - i = s(t-1)$ for some $s$ satisfying $0\leq s \leq \min\{i, \left\lceil\frac{n-t+1}{t+1}\right\rceil\} \text{ and } i \leq 2s$. By the recursive formula (\ref{eq.linegraph}), we have that at least one of the graded Betti numbers $\beta_{i,j}(S/I_t(L_{n-1}))$, $\beta_{i-1,j-t}(S/I_t(L_{n-(t+1)})$ or $\beta_{i-2,j-t-1}(S/I_t(L_{n-(t+1)}))$ must be nonzero.

If $\beta_{i,j}(S/I_t(L_{n-1})) \not= 0$ then, by the induction hypothesis, we have $j - i = s(t-1)$ where $0 \le s \le \min \{i, \lceil\frac{(n-1)-t+1}{t+1}\rceil\} \le \min\{i, \lceil\frac{n-t+1}{t+1}\rceil\}$ and $i \le 2s$.  And so (2) follows.
If $\beta_{i-1,j-t}(S/I_t(L_{n-(t+1)})) \not= 0$, then by the induction hypothesis, we have $(j-i) - (t-1) = (j-t) - (i-1) = s'(t-1)$ for some $0 \le s' \le \min\{i-1, \lceil\frac{n-t+1}{t+1}\rceil - 1\}$ and $i-1 \le 2s'$. By taking $s = s'+1$, (2) again follows. In the case where $\beta_{i-2,j-t-1}(S/I_t(L_{n-(t+1)})) \not= 0$, a similar argument again implies that once again (2) follows.

We proceed to show that (2) $\Rightarrow$ (1). Suppose that $j-i = s(t-1)$ where $0 \le s \le \min\{i, \left\lceil\frac{n-t+1}{t+1}\right\rceil\}$ and $i \leq \min\{2s, \pd(S/I_t(L_n))\}$. Clearly, (1) holds if $s = 0$, so we may assume that $s > 0$.

If $s \le i-1$ (i.e., $s-1 \le i-2$), then set $s' = s-1$. By Remark~\ref{rmk.linegraph}, we have $i-2 \le \pd(S/I_t(L_{n-(t+1)}))$. Since $(j-t-1) - (i-2) = (j-i) - (t-1) = s'(t-1)$ and $i-2 \le 2s-2 = 2s'$, the induction hypothesis now implies that $\beta_{i-2,j-t-1}(S/I_t(L_{n-(t+1)})) \not= 0$, and (1) follows from \eqref{eq.linegraph}.

It remains to consider the case where $s = i$. This, in particular, implies that $s = i \le \lceil \frac{n-t+1}{t+1} \rceil$. Observe that if $n \equiv t \bmod (t+1)$, $n \ge t+1$ implies that $\lceil \frac{n-t+1}{t+1} \rceil + 1 = \lceil \frac{n+2}{t+1} \rceil \le \lceil \frac{2n - t + 1}{t+1} \rceil = \pd(S/I_t(L_n))$. On the other hand, if $n \equiv d \bmod (t+1)$ for some $d \le t-1$, then $n \ge (t+1) + d$ and we have $\lceil \frac{n-t+1}{t+1} \rceil + 1 = \lceil \frac{n+2}{t+1} \rceil \le \lceil \frac{2n - 2d}{t+1} \rceil = \pd(S/I_t(L_n))$. Therefore we have $i \le \lceil \frac{n-t+1}{t+1} \rceil \le \pd(S/I_t(L_n)) - 1 =  \pd(S/I_t(L_{n-(t+1)})) + 1$; that is, $i - 1 \le \pd(S/I_t(L_{n-(t+1)}))$. Also, $i-1 = s-1 \le 2(s-1)$. Now, set $s' = s-1$, and observe that $(j-t) - (i-1) = (j-i) - (t-1) = s'(t-1)$ and $i - 1 \le \min \{2s', \pd(S/I_t(L_{n-(t+1)}))\}$. The induction hypothesis implies that $\beta_{i-1,j-t}(S/I_t(L_{n-(n+1)})) \not= 0$, and (1) follows again from \eqref{eq.linegraph}.
\end{proof}

As a consequence of Theorem \ref{thm.nonzero}, we can compute the regularity of $S/I_t(L_n)$ and some graded Betti numbers of $I_t(L_n)$ explicitly.

\begin{corollary} \label{thm.reglinegraph}
Let $L_n$ be a path graph over $n \ge t$ vertices.  Then the Castelnuovo-Mumford regularity of $S/I_t(L_n)$ is given by
$$\reg(S/I_t(L_n)) = (t-1) \left\lceil \frac{n-t+1}{t+1} \right\rceil.$$
\end{corollary}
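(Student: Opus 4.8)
The plan is to read the regularity directly off the classification of nonzero graded Betti numbers provided by Theorem~\ref{thm.nonzero}, using $\reg(S/I_t(L_n)) = \max\{j-i ~|~ \beta_{i,j}(S/I_t(L_n)) \ne 0\}$. Write $N := \left\lceil \frac{n-t+1}{t+1} \right\rceil$, so that the claimed value is $(t-1)N$.

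For the upper bound $\reg(S/I_t(L_n)) \le (t-1)N$: if $\beta_{i,j}(S/I_t(L_n)) \ne 0$, then by Theorem~\ref{thm.nonzero} we have $j-i = s(t-1)$ for some integer $s$ with $0 \le s \le \min\{i, N\} \le N$, hence $j - i \le (t-1)N$. Since this holds for every nonzero graded Betti number, the bound follows.

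For the reverse inequality I would exhibit one nonzero Betti number attaining $j - i = (t-1)N$, namely with $s = i = N$ and $j = i + s(t-1) = Nt$. Checking the hypotheses of the implication (2)$\Rightarrow$(1) in Theorem~\ref{thm.nonzero}: $0 \le s \le \min\{i,N\}$ holds with equality, and $i \le 2s$ holds since $i = s$; the only genuine point is $i \le \pd(S/I_t(L_n))$, i.e.
$$N = \left\lceil \tfrac{n-t+1}{t+1}\right\rceil \le \pd(S/I_t(L_n)).$$
I would verify this using the explicit formula of Corollary~\ref{pdlinegraph} by a short case analysis on $d$, the residue of $n$ modulo $t+1$. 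When $0 \le d \le t-1$, a direct computation gives $N = (n-d)/(t+1) = \tfrac12\,\pd(S/I_t(L_n)) \le \pd(S/I_t(L_n))$; when $d = t$, one gets $N = (n+1)/(t+1)$, and since $n \ge t$ this is at most $(2n - t + 1)/(t+1) = \pd(S/I_t(L_n))$. With this inequality in hand, Theorem~\ref{thm.nonzero} yields $\beta_{N,Nt}(S/I_t(L_n)) \ne 0$, so $\reg(S/I_t(L_n)) \ge Nt - N = (t-1)N$, which together with the upper bound proves the corollary.

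The only real obstacle is the last displayed inequality comparing the ceiling $N$ with the piecewise expression for $\pd(S/I_t(L_n))$; everything else is an immediate translation of Theorem~\ref{thm.nonzero}. Alternatively, one could bypass Corollary~\ref{pdlinegraph} entirely and induct on $n$ via the recursion $\reg(S/I_t(L_n)) = \max\{\reg(S/I_t(L_{n-1})),\ \reg(S/I_t(L_{n-(t+1)})) + (t-1)\}$ extracted from \eqref{eq.linegraph}, since in that formula each of the two colon-ideal contributions shifts the internal degree by $t$ and the homological degree by $1$ or $2$, for a net shift of $t-1$ in $j-i$; but given that Theorem~\ref{thm.nonzero} is already established, the Betti-number argument above is shorter.
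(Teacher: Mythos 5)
Your argument is correct and is essentially the paper's own proof: the paper likewise reads the regularity off Theorem~\ref{thm.nonzero}, with the only substantive point being the inequality $\left\lceil \frac{n-t+1}{t+1} \right\rceil \le \pd(S/I_t(L_n))$ from Corollary~\ref{pdlinegraph}, which you verify by the same case analysis on $n \bmod (t+1)$. Your write-up just makes the extremal choice $s=i=\left\lceil \frac{n-t+1}{t+1} \right\rceil$, $j=it$ explicit, which the paper leaves implicit.
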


\begin{proof} The conclusion follows from Theorem \ref{thm.nonzero} noticing, by Corollary \ref{pdlinegraph}, that $\lceil \frac{n-t+1}{t+1} \rceil \le \pd(S/I_t(L_n)).$
\end{proof}

\begin{corollary}
Let $L_n$ be a path graph over $n \ge t$ vertices.  Then
$$\beta_{i,it}(S/I_t(L_n)) = \binom{n-it+1}{i}.$$
\end{corollary}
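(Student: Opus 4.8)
The plan is to prove the formula $\beta_{i,it}(S/I_t(L_n)) = \binom{n-it+1}{i}$ by induction on $n$, using the recursion \eqref{eq.linegraph}. First I would note that the bidegree $(i,it)$ lies on the $s$-line with $s = i$, since $j - i = it - i = i(t-1)$, so this is the ``extreme'' case $s = i$ appearing at the end of the proof of Theorem~\ref{thm.nonzero}; in particular these Betti numbers are exactly the ones on the ``bottom edge'' of the nonvanishing region. Plugging $j = it$ into \eqref{eq.linegraph} gives
\begin{align*}
\beta_{i,it}(S/I_t(L_n)) &= \beta_{i,it}(S/I_t(L_{n-1})) + \beta_{i-1,it-t}(S/I_t(L_{n-(t+1)})) + \beta_{i-2,it-t-1}(S/I_t(L_{n-(t+1)})).
\end{align*}
The middle term is $\beta_{i-1,(i-1)t}(S/I_t(L_{n-(t+1)}))$, again of the form $\beta_{k,kt}$, and the third term has $j - i = (it-t-1) - (i-2) = (i-1)(t-1) - 1$, which is not a multiple of $t-1$ (for $t \ge 2$), hence vanishes by Theorem~\ref{thm.nonzero}. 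So the recursion collapses to $\beta_{i,it}(S/I_t(L_n)) = \beta_{i,it}(S/I_t(L_{n-1})) + \beta_{i-1,(i-1)t}(S/I_t(L_{n-(t+1)}))$.

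Next I would set up the induction. The base cases are small $n$: for $n = t$ one has $I_t(L_t) = (x_1\cdots x_t)$, so $\beta_{0,0} = 1 = \binom{1}{0}$, $\beta_{1,t} = 1 = \binom{1}{1}$, and all other $\beta_{i,it} = 0$, matching $\binom{t - it + 1}{i}$ (which is $0$ for $i \ge 2$); one should also check $n < t$ where $I_t(L_n) = (0)$. For the inductive step, assuming the formula for all smaller path graphs, the collapsed recursion gives
\begin{align*}
\beta_{i,it}(S/I_t(L_n)) &= \binom{(n-1) - it + 1}{i} + \binom{(n-(t+1)) - (i-1)t + 1}{i-1} = \binom{n - it}{i} + \binom{n - it}{i-1} = \binom{n - it + 1}{i},
\end{align*}
by Pascal's rule. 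The only subtlety is keeping track of edge conventions: when $n - (t+1) < t$ the term $\beta_{i-1,(i-1)t}(S/I_t(L_{n-(t+1)}))$ refers to the zero ideal, so one must verify $\binom{n - it}{i-1}$ is consistent there (it will be: $n - (t+1) < t$ forces $n - it < i - 1$ for the relevant range, making the binomial coefficient $0$), and similarly one must handle the boundary where $n - 1 < t$.

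The main obstacle, such as it is, is the bookkeeping at the boundaries — ensuring that the binomial-coefficient identity $\binom{n-it}{i} + \binom{n-it}{i-1} = \binom{n-it+1}{i}$ remains valid (with the convention $\binom{m}{k} = 0$ for $m < k$ or $m < 0$) precisely in the ranges where the corresponding Betti numbers are forced to vanish, and correctly invoking Theorem~\ref{thm.nonzero} to kill the third term of the recursion. Everything else is a direct application of Pascal's rule to \eqref{eq.linegraph}; no new homological input is needed beyond Theorem~\ref{nocancellation} (already packaged into \eqref{eq.linegraph}) and the vanishing criterion of Theorem~\ref{thm.nonzero}.
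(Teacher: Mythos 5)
Your overall architecture is exactly the paper's: specialize \eqref{eq.linegraph} to $j = it$, kill the third term using Theorem~\ref{thm.nonzero}, and then apply induction together with Pascal's rule. However, your justification for killing that third term contains an arithmetic error that invalidates the step as written. For the Betti number $\beta_{i-2,\,it-t-1}(S/I_t(L_{n-(t+1)}))$ the relevant shift is
$$(it-t-1)-(i-2) = it - t - i + 1 = (i-1)(t-1),$$
not $(i-1)(t-1)-1$; you appear to have subtracted the homological index $i$ instead of $i-2$. So the quantity \emph{is} a multiple of $t-1$, and the ``not a multiple of $t-1$'' argument collapses (it would also fail for $t=2$ even on your own arithmetic, since every integer is a multiple of $t-1=1$). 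The term does vanish, but for a different reason, which is the one the paper uses: the unique $s$ with $(it-t-1)-(i-2) = s(t-1)$ is $s = i-1$, and Theorem~\ref{thm.nonzero} requires $s \le i-2$ (the homological index), equivalently $j-i \le i(t-1)$; since $(i-1)(t-1) > (i-2)(t-1)$, condition (2) fails and the Betti number is zero.

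With that one step repaired, the rest of your argument — the identification of the middle term as $\beta_{i-1,(i-1)t}(S/I_t(L_{n-(t+1)}))$, the induction on $n$, the base case $n=t$, the boundary conventions when $n-(t+1) < t$, and the Pascal identity $\binom{n-it}{i}+\binom{n-it}{i-1}=\binom{n-it+1}{i}$ — matches the paper's proof and is correct.
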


\begin{proof} Observe first that Theorem \ref{thm.nonzero} implies that $\beta_{i-2,it-t-1}(S/I_t(L_{n-(t+1)}))=0$ since $(it-t-1)-(i-2)=(i-1)(t-1) > (i-2)(t-1)$. Now, by induction, assume that the result holds for all $L_m$ with $m<n$.  It then follows from \eqref{eq.linegraph} that
\begin{align*}
\beta_{i,it}(S/I_t(L_n))
&= \beta_{i,it}(S/I_t(L_{n-1})) + \beta_{i-1,it-t}(S/I_t(L_{n-(t+1)}))\\
&= \binom{(n-1)-it+1}{i} + \binom{(n-(t+1))-(i-1)t+1}{i-1}\\
&= \binom{n-it}{i} + \binom{n-it}{i-1}\\
&= \binom{n-it+1}{i}.
\end{align*}
\end{proof}

\begin{remark} As before, the linear strand of $I_t(L_n)$ is supported by a cellular complex. It is not hard to see that this complex is also a path graph, namely, the path graph $L_{n-t+1}$.
\end{remark}



\begin{thebibliography}{99}
\bibitem{bouchat} Rachelle Bouchat, {\it Free resolutions of some edge ideals of simple graphs}, Journal of Commutative Algebra {\bf 2} (2010), no.1, 1-36.
\bibitem{brumatti} Paulo Brumatti and Aparecida Francisco da Silva, {\it On the symmetric and Rees algebras of $(n,k)$-cyclic ideals}, Mat. Contemp. {\bf 21} (2001), 27-42, 16th School of Algebra, Part II (Portuguese), Bras\'ilia, 2000.
\bibitem{concaDeNegri} Aldo Conca and Emanuela De Negri, {\it $M$-sequences, graph ideals, and ladder ideals of linear type}, Journal of Algebra {\bf 211} (1999), no.2, 599-624.
\bibitem{HVT1} Huy T\`ai H\`a and Adam Van Tuyl, {\it Splittable ideals and the resolutions of monomial ideals}, Journal of Algebra {\bf 309} (2007), no. 1, 405-425.
\bibitem{HVT2} Huy T\`ai H\`a and Adam Van Tuyl, {\it Monomial ideals, edge ideals of hypergraphs, and their graded Betti numbers}, Journal of Algebraic Combinatorics {\bf 27} (2008), no. 2, 215-145.
\bibitem{heVanTuyl} Jing (Jane) He and Adam Van Tuyl, {\it Algebraic properties of the path ideal of a tree}, arXiv:0902.0902.
\bibitem{restuccia} Gaetana Restuccia and Rafael H. Villarreal, {\it On the normality of monomial ideals of mixed products}, Comm. Algebra {\bf 29} (2001), no. 8, 3571-3580.
\bibitem{MillerSturmfels2004} Ezra Miller and Bernd Sturmfels, {\it Combinatorial Commutative
Algebra}, Springer GTM 227, Springer, 2004.
\bibitem{villarreal} Rafael H. Villarreal, {\it Rees algebras of edge ideals}, Comm. Algebra {\bf 23} (1995), no. 9, 3513-3524.
\bibitem{Weibel} Charles A. Weibel, {\it An introduction to homological algebra}, Cambridge Studies in Advanced Mathematics 38. Cambridge University Press, Cambridge, 1994.

\end{thebibliography}
\end{document}